
\documentclass[10pt]{article}

\usepackage{graphicx,amssymb,amsmath}
\usepackage{epsfig}
\usepackage{epsf}
\usepackage{amsfonts}
\usepackage{latexsym}
\usepackage{amssymb}
\usepackage{amsthm}
\usepackage{graphics}
\usepackage{float}
\usepackage{subfigure}

\setlength{\oddsidemargin}{0in}
\setlength{\evensidemargin}{0in}
\setlength{\topmargin}{0in}
\setlength{\headheight}{0in}
\setlength{\headsep}{0in}
\setlength{\textwidth}{6.5in}
\setlength{\textheight}{8.8in}

\newtheorem{thm}{Theorem}[section]

\newtheorem{defi}[thm]{Definition}
\newtheorem{lem}[thm]{Lemma}

\newtheorem{claim}[thm]{Claim}

\title{Multiple coverings with closed polygons}

\author{%
Istv\'an Kov\'acs\thanks{Technical University, Budapest.
Supported by T\'amop - 4.2.2.B-10/1--2010-0009.}
\and
G\'eza  T\'oth\thanks{Alfr\'ed R\'enyi Institute of Mathematics,
Budapest, Hungary. 
Supported by Hungarian Science Foundation Grants OTKA K-83767 and NN-102029}}

\begin{document}
\maketitle

\begin{abstract}
A planar set $P$ is said to be cover-decomposable
if there is a constant $k=k(P)$ such that
every $k$-fold covering of the plane with translates of $P$
can be decomposed into two coverings. It is known that open convex polygons
are cover-decomposable. Here we show that 
closed, centrally symmetric convex polygons are 
also  cover-decomposable. 
We also show that an {\em infinite-fold} covering of the plane 
with translates of $P$
can be decomposed into two infinite-fold coverings.
Both results hold for coverings of any subset of the plane. 
\end{abstract}

\section{Introduction}

The study of multiple coverings was initiated by Davenport and L.
Fejes T\'oth about 60 years ago \cite{BMP05}. 
Let ${\cal S}=\{\ S_i\ |\ i\in I\ \}$ be a collection of sets in the plane.
We say that ${\cal S}$ is an {\em $m$-fold
covering} if every point of the plane is contained in at least
$m$ members of $\cal H$. 
A $1$-fold covering is simply called a
{\em covering}.
Clearly, the union of $k$ coverings is always a $k$-fold covering, but it is
easy to see that the converse 
is not necessarily true, not even in the special case when ${\cal S}$ is a collection of {\em
translates} of a given set.

\begin{defi}\label{def-coverdc}  
A planar set $S$ is said to be
{cover-decomposable} if there exists a (minimal) constant
$k=k(S)$ such that every $k$-fold covering of the plane with
translates of $S$ can be decomposed into two coverings.
\end{defi}

J. Pach proposed the
problem of determining all cover-decomposable sets in 1980 \cite{P80}. 
He conjectured that all planar convex sets are cover-decomposable. 
Today there is a vast
literature on this subject, partly because of its theoretical interest \cite{PPT14}, and
partly because of its applications in the  
{\em sensor cover problem} in sensor network
scheduling \cite{GV11}. 

Pach verified his conjecture for centrally symmetric
open convex polygons \cite{P86}.
The next result in this direction was by G. Tardos and 
G. T\'oth \cite{TT07}, they proved that open triangles 
are cover-decomposable.
Finally, D. P\'alv\"olgyi and G. T\'oth \cite{PT10}
proved that all open convex polygons are cover-decomposable.

Observe, that all of these general positive results hold only 
for {\em open} sets.
The reason is that -- based on the ideas of Pach \cite{P86} --
all proofs reduce the problem to a finite problem, and that reduction 
works only for open sets. 
We belive that in fact all these results can be generalized for the closed 
version. The main result of this paper is the first step in this direction.

\begin{thm}\label{fotetel}
Every centrally symmetric
closed convex polygon is cover-decomposable.
\end{thm}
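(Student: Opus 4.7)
After translation, assume $P = -P$ is centered at the origin. By central symmetry, $x \in y + P$ iff $y \in x + P$, so a family $\mathcal{F} = \{y_i + P : i \in I\}$ covers the plane iff the set $Y_x := \{y_i : y_i \in x + P\}$ is nonempty for every $x \in \mathbb{R}^2$, and a decomposition into two coverings corresponds to a $2$-coloring $\chi : I \to \{1,2\}$ for which every $Y_x$ is bichromatic. The theorem thus reduces to $2$-coloring the hypergraph $H = \{Y_x : x \in \mathbb{R}^2\}$.

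The plan is to adapt Pach's proof for the open centrally symmetric case. Let $v_1, \dots, v_n$ denote the vertices of $P$ in cyclic order. For each point $x$ and each index $j$, an element $y \in Y_x$ lies on the edge of $x + P$ meeting $v_j$ precisely when $y$ maximizes a specific linear functional $\ell_j$ on $Y_x$. In the open case this extremum is generically unique, producing a canonical selection $f_j(x) \in Y_x$; Pach's argument $2$-colors the reduced $n$-uniform hypergraph whose edges are the $n$-element sets $\{f_1(x), \dots, f_n(x)\}$ via a local combinatorial lemma, and this coloring automatically makes every $Y_x$ bichromatic. I would try to reuse this framework with two modifications aimed at the closed setting: (i) define $f_j(x)$ by an iterated lexicographic tie-break — maximize $\ell_j$, then a secondary functional along the edge incident to $v_j$, and so on — which stabilizes after finitely many refinements because $P$ has finitely many edges; and (ii) verify that this selection produces a reduced hypergraph on which the coloring lemma from the open case still applies, and that bichromaticity of $\{f_1(x), \dots, f_n(x)\}$ forces bichromaticity of all of $Y_x$.

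The main obstacle is exactly where the open-case reduction breaks: for closed $P$, countably many translates can share an edge of $x + P$, and others can accumulate at a vertex, so $Y_x$ may be genuinely infinite with pathological boundary structure. After tie-breaking, I would need a stability lemma showing that the selections $f_j(x)$ vary in a locally finite manner as $x$ moves — that is, on any compact region only finitely many $y_i$ ever occur as selections — so that the reduced hypergraph inherits the ``locally finite'' structure the combinatorial $2$-coloring argument needs. Proving such stability is delicate: one must exploit central symmetry to rule out infinite chains of boundary coincidences that would force the selection to jump unboundedly. Once this geometric step is in place, I expect the combinatorial part of the coloring argument to follow the open-case proof essentially verbatim.
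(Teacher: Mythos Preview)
Your dualization is fine and matches the paper: reduce to $2$-coloring a bounded point set $\mathcal{H}$ so that every translate of an $S$-wedge containing at least $m$ points is bichromatic. The gap is everything after that.

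Your plan is to force the problem back into Pach's finite framework by defining lexicographic selections $f_j(x)$ and hoping for a stability lemma that makes the selection hypergraph locally finite. That stability lemma is false. Take $S$ the closed unit square and $\mathcal{H}=\{(1/k,0):k\ge 1\}\cup\{(0,0)\}$: every point of $\mathcal{H}$ is an $E_i$-boundary point for the right-pointing wedge, every point occurs as a selection $f_j(x)$ for some $x$ in any neighbourhood of the origin, and no tie-breaking rule will make the selections locally finite near the accumulation point. More generally, closed wedges allow the boundary $\mathbb{B}$ of $\mathcal{H}$ to be an infinite linearly ordered set with accumulation points, isolated (``lonely'') points, and dense-in-itself intervals simultaneously. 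Pach's finite cyclic-order $2$-coloring lemma simply does not apply to such an object; the sentence ``the combinatorial part \ldots\ follows the open-case proof essentially verbatim'' is where the argument breaks, not the geometric step you flagged.

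The paper does \emph{not} attempt any reduction to a finite or locally finite hypergraph. Instead it colors $\mathcal{H}$ in several layers: the boundary $\mathbb{B}$ of $\mathcal{H}$, then the boundary $\mathbb{B}'$ of the interior points, then a third boundary $\mathbb{B}''$ taken with respect to the \emph{open} polygon $S'$, and finally the remaining interior. The coloring of each boundary layer is itself a new construction (\textsc{Black-White-Boundary-Coloring}): it partitions boundary points into ``social'' ones (having neighbours in the cyclic order) and ``lonely'' ones, colors social points by a parity rule, and colors lonely points by a recursive interval-halving scheme designed so that every infinite interval of $\mathbb{B}$ receives infinitely many points of each color. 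Wedges with infinitely many points are then handled by showing they either meet an infinite boundary interval, or contain an accumulation point in their interior that is caught by a separate recursive quadtree coloring of the deep interior. None of this machinery is present in the open-case proof, and it is precisely what replaces the locally-finite reduction you were hoping for.
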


From the other direction, J. Pach, G. Tardos, and G. T\'oth \cite{PTT07} 
proved that (open and closed) 
concave quadrilaterals are {\em not} cover-decomposable. 
It was generalized by D. P\'alv\"olgyi \cite{P10} who showed for a large class 
of concave polygons that they are not  cover-decomposable. 
It is still not known whether there exists a  cover-decomposable
concave polygon.
Very recently, D. P\'alv\"olgyi \cite{P13} {em refuted} Pach's conjecture. He
proved that open and closed  
sets
with smooth boundary are 
not cover-decomposable. In particular, the unit disc is not
cover-decomposable.

Splitting infinite-fold coverings can lead to very deep problems.
Elekes, M\'artai and Soukup \cite{EMS11} constructed an infinite-fold covering 
of the line by translates of a closed set, whose decomposability 
is independent of ZFC. We believe that it is not the case 
for coverings of the plane with translates of a {\em convex} closed set.

It follows directly from Theorem \ref{fotetel} that an infinite-fold covering
of the plane with translates of a closed, convex, centrally symmetric polygon
is
decomposable into two coverings. We prove the following stronger result.

\begin{thm}\label{vegtelen}
Let $S$ be a closed, convex, centrally symmetric polygon.
Then every infinite-fold covering of the plane with 
translates of $S$ can be decomposed into two infinite-fold coverings. 
\end{thm}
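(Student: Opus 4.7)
The plan is to reduce Theorem~\ref{vegtelen} to a countable disjoint decomposition of $\mathcal{C}$. Concretely, I will construct pairwise disjoint subcollections $\mathcal{D}_1, \mathcal{D}_2, \ldots \subseteq \mathcal{C}$, each a (1-fold) covering of the plane (or of the given subset $X$). Given such a decomposition, one partitions $\mathbb{N}$ into two infinite subsets $A$ and $B$ and sets $\mathcal{C}_1 := \bigcup_{i \in A}\mathcal{D}_i$, $\mathcal{C}_2 := \bigcup_{i \in B}\mathcal{D}_i$, assigning any leftover translates of $\mathcal{C}$ arbitrarily to $\mathcal{C}_1$. Each $\mathcal{C}_j$ is then a union of infinitely many coverings and hence an infinite-fold covering, and together they partition $\mathcal{C}$ as required.

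The $\mathcal{D}_i$ are constructed inductively by peeling off one sub-covering at a time. Suppose disjoint sub-coverings $\mathcal{D}_1, \ldots, \mathcal{D}_n \subseteq \mathcal{C}$ have been chosen so that the remainder $\mathcal{R}_n := \mathcal{C} \setminus \bigcup_{i \leq n}\mathcal{D}_i$ is still infinite-fold. I want to extract $\mathcal{D}_{n+1} \subseteq \mathcal{R}_n$ which covers the plane and is \emph{locally finite}, meaning that every point lies in only finitely many of its members; local finiteness guarantees that $\mathcal{R}_{n+1} = \mathcal{R}_n \setminus \mathcal{D}_{n+1}$ is again infinite-fold. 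To build such a $\mathcal{D}_{n+1}$, exhaust the plane (or $X$) by an increasing sequence of compact sets $K_1 \subset K_2 \subset \cdots$, and at stage $j$ choose a finite subcollection $\mathcal{F}_j \subseteq \mathcal{R}_n$, disjoint from $\mathcal{F}_1 \cup \cdots \cup \mathcal{F}_{j-1}$, covering the compact annulus $A_j := K_j \setminus \operatorname{int}(K_{j-1})$ (with $K_0 := \emptyset$). Setting $\mathcal{D}_{n+1} := \bigcup_j \mathcal{F}_j$ yields a covering of the plane; local finiteness follows because a translate covering a given point $p$ must have its center within distance $\operatorname{diam}(S)$ of $p$, while the centers of the translates in $\mathcal{F}_j$ lie in $A_j - S$, which drifts off to infinity with $j$.

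The main obstacle is the extraction of a \emph{finite} subcollection of $\mathcal{R}_n$ covering the compact set $A_j$. For coverings by open sets this would follow immediately from Heine-Borel, but because the translates are closed polygons it is a priori possible that some point of $A_j$ lies only on the boundary of every translate of $\mathcal{R}_n$ that contains it, so that no finite subfamily can cover that point. To overcome this I would invoke Theorem~\ref{fotetel} together with the polygon structure of $S$: $\mathcal{R}_n$ restricted to $A_j$ is in particular $k$-fold (with $k = k(S)$), so Theorem~\ref{fotetel} decomposes it into two coverings of $A_j$; and since all translates meeting $A_j$ have centers in a bounded region and $S$ has only finitely many edges, a polyhedral arrangement argument then extracts a finite sub-cover from one of these two coverings. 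This finite-extraction step, which adapts the closed-polygon compactness reduction underlying the proof of Theorem~\ref{fotetel} itself, is where the technical work concentrates.
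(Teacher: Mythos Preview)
Your peeling strategy hinges on extracting, from an infinite-fold covering by \emph{closed} translates, a finite subcover of each compact annulus $A_j$---and this step fails. Take $S=[-\tfrac12,\tfrac12]^2$ and let $\mathcal{C}$ consist of all translates $S+(c_1,c_2)$ with $c_1\in\mathbb{R}$ and $c_2\in(-\infty,0)\cup[1,\infty)$. One checks easily that $\mathcal{C}$ is an infinite-fold covering of the whole plane; in particular each point $(a,\tfrac12)$ lies in the uncountably many translates with $c_2=1$, $c_1\in[a-\tfrac12,a+\tfrac12]$. But for $A_1=[0,1]^2$ there is no finite subcover: translates with $c_2\ge 1$ meet $A_1$ only where $y\ge\tfrac12$, while any finite collection of translates with $c_2<0$ has some maximum value $c_2^*<0$ and therefore covers only $y\le c_2^*+\tfrac12<\tfrac12$, leaving the strip $[0,1]\times(c_2^*+\tfrac12,\tfrac12)$ uncovered. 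Invoking Theorem~\ref{fotetel} does not help---splitting $\mathcal{C}$ into two coverings of $A_1$ produces two families with exactly the same defect---and there is no useful ``polyhedral arrangement'' to appeal to, since the bottom family is a genuine continuum in the parameter $c_2\to 0^-$. Even your weaker goal of extracting a \emph{locally finite} subcover fails in this example, for the same reason: any subcover must contain bottom translates with $c_2$ accumulating at $0$, and all of these meet every neighbourhood of the segment $y=\tfrac12$.

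The paper's proof proceeds along an entirely different line. It passes to the dual (Theorem~\ref{vegtelen-dual}), reducing the statement to a two-colouring of a bounded point set such that every $S$-wedge containing infinitely many points contains infinitely many of each colour. The colouring is built level by level: one repeatedly strips off the boundary $\mathbb{B}^{(n)}$ of the remaining set (taken with respect to the \emph{open} polygon $S'$), colours each level via the {\sc Black-White-Boundary-Coloring} procedure with the roles of red and blue alternating between even and odd $n$, and handles the residual set ${\cal H}^*$ and accumulation points by a dyadic square subdivision. What replaces your missing compactness step is the dichotomy that a closed wedge with infinitely many points either meets infinitely many boundary levels or has an accumulation point in its \emph{interior}; in both cases the alternating level-colouring yields infinitely many points of each colour.
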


Cover-decomposability has many other versions, 
instead of the plane, we can investigate, and decompose 
coverings of an arbitrary subset of the plane. 
We can consider only coverings with finite, countably many, or arbitrarily
many translates. 
In the last section we review some of these versions of cover-decomposability.
Our Theorems \ref{fotetel} and \ref{vegtelen} hold for each of these
versions, with the same proof.

\section{Centrally symmetric closed convex polygons;\\
Proof of Theorem 1.}

\subsection{Taking the dual, reduction to wedges}

Just like in most of the papers about cover-decomposability, 
 we formulate
and solve the problem in its {\em dual
form}. The idea is originally due to J. Pach 
\cite{P86}.
Suppose that $S$ is an open or closed, centrally symmetric convex polygon,
its vertices are $v_1, v_2$, $\ldots$, $v_{2n}$, ordered clockwise. Indices are
understood modulo $2n$. 

\begin{defi}
For any two points, $a$ and $b$, let
$\overrightarrow{ab}$ denote the halfline whose endpoint is $a$ and goes
through $b$.
Let $arg(\overrightarrow{ab})$ denote the clockwise angle from the positive
$x$ axis to $\overrightarrow{ab}$.
\end{defi}

\begin{defi}\label{wedgedef}
For every 
$i$, $1\le i\le 2n$, let $E_i$ denote the convex wedge whose bounding 
halflines are the translates of $\overrightarrow{v_iv_{i-1}}$ and  
$\overrightarrow{v_iv_{i+1}}$.
If $S$ is closed (resp. open), then let $E_i$ also be closed (resp. open).
$E_i$ is called the wedge that belongs to vertex $v_i$ of $S$.
We say that a wedge $E$ belongs to $S$, or $E$ is an $S$-wedge, 
if it belongs to one of its vertices.
For any point $p$, let $E_i(p)$ denote the translate of $E_i$ such that its
apex is in $p$.
\end{defi}

Now we can state the dual version of Theorem \ref{fotetel}

\begin{thm}\label{dualfotetel}
Let $S$ be a centrally symmetric
closed convex polygon, with vertices $v_1, v_2, \ldots , v_{2n}$, ordered
clockwise. 
Then there is an $m=m(S)>0$ with the following property.

Any bounded point set $\cal H$ can be colored with red and blue such that 
any translate of an $S$-wedge, $E_i(p)$, if 
$|E_i(p)\cap{\cal H}|\ge m$, then $E_i(p)\cap{\cal H}$ contains points of both
colors.
\end{thm}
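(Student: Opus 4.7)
The plan is to follow the standard Pach-style strategy: handle each wedge of $S$ separately, then combine the resulting colourings. Central symmetry of $S$ gives $E_{i+n}=-E_i$ for each $i$, so a single total order on $\mathcal H$ will serve both wedges of an opposite pair, cutting the number of wedge directions to handle from $2n$ to $n$.

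First I would fix an opposite pair $(E_i,E_{i+n})$ and define a partial order by $q\preceq_i p$ iff $p-q\in E_i$ (equivalently $q-p\in E_{i+n}$). A translate $E_i(p)$ intersected with $\mathcal H$ is exactly the principal downset $\{q\in\mathcal H : q\preceq_i p\}$, and $E_{i+n}(p)\cap \mathcal H$ is the corresponding principal upset. Extending $\preceq_i$ to a total order $<_i$ by a lexicographic tie-breaker aligned with the bisector direction of $E_i$, the downset becomes a prefix of $<_i$ up to bounded tie-breaking corrections. Colouring $\mathcal H$ alternately red and blue along $<_i$ then ensures every long prefix, and so every long wedge translate for the pair, is bichromatic, provided $m$ exceeds those corrections.

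The main obstacle, and the whole reason the open-case proof does not transfer verbatim, is the closed boundary. A point $q$ with $p-q$ lying on one of the two bounding rays of $E_i$ belongs to the closed wedge $E_i(p)$, and such boundary points can accumulate along lines parallel to a bounding ray of $E_i$. These collinear ``boundary chains'' are the new phenomenon: the lex tie-breaker can line many of them up consecutively in $<_i$, so that a wedge translate picking up a long stretch of one chain threatens to be monochromatic. I would resolve this by overlaying a secondary alternating colouring along each such line and then verifying that this secondary rule does not conflict with the primary one. Central symmetry is the key asset here: the relation ``$q$ lies on a bounding ray of $E_i(p)$'' is symmetric in $p$ and $q$ after swapping $E_i$ with $E_{i+n}$, so a single consistent rule along each chain handles both wedges of the pair at once.

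Finally, to obtain a single colouring that works for all $n$ wedge pairs simultaneously, I would iterate: colour first for the pair $(E_1,E_{n+1})$, then within each colour class recolour for the next pair, and so on for all $n$ pairs. A routine accounting yields $m(S)=O(n\cdot m_0)$, where $m_0$ is the threshold produced by the single-pair argument. In this plan the two standard steps, reducing to wedges and iterating over the $n$ pairs, are expected to be essentially routine; I expect the boundary-chain analysis in the middle step to be the technical crux.
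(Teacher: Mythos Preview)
Your single-pair step does not work as stated. The assertion that the principal downset $E_i(p)\cap\mathcal H=\{q:q\preceq_i p\}$ becomes ``a prefix of $<_i$ up to bounded tie-breaking corrections'' is false for any linear extension of $\preceq_i$. Take $E_i$ to be the closed first quadrant and $\mathcal H$ an $N\times N$ grid: the downset below the point $(N,1)$ is a single row of $N$ points, while any linear extension compatible with the coordinatewise order interleaves these with the $\Theta(N^2)$ incomparable points in the other rows. There is no bound, independent of $\mathcal H$, on the discrepancy between a principal downset and a prefix. Alternating colours along $<_i$ therefore gives no control over $E_i(p)\cap\mathcal H$; the ``bounded corrections'' you need simply do not exist. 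This is not a closed-vs-open issue: the same obstruction is already present for open wedges, and the boundary-chain phenomenon you isolate is a secondary difficulty, not the main one.

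What makes the problem tractable is restricting attention to the \emph{$E_i$-minimal} points of $\mathcal H$ (those $q$ with $E_i(q)\cap\mathcal H=\{q\}$). A wedge $E_i(p)$ meets this boundary set in an interval of the order given by projection to $\ell_i^{\perp}$, and that is where an alternating-type colouring becomes meaningful. The paper does not iterate over opposite pairs; instead it glues the $2n$ boundary orders into a single cyclic order on the union $\mathbb B=\bigcup_i\mathbb B_i$, shows each wedge meets $\mathbb B$ in at most two intervals, and colours $\mathbb B$ once. The genuinely new work for closed $S$ and infinite $\mathcal H$ is that $\mathbb B$ need not be discrete in this cyclic order: one must cope with lonely boundary points, with rich boundary points whose wedge already contains interior points, and with accumulation, which forces a layered scheme (first-level boundary, second-level boundary of the interior, a third level using the \emph{open} wedges, and a residual square-subdivision step). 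Your proposal does not engage with this structure, and the iteration you sketch in the last paragraph would in any case give $m(S)$ exponential in $n$, not $O(n\cdot m_0)$.
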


\medskip

\noindent {\bf Proof of Theorem \ref{fotetel} from Theorem \ref{dualfotetel}.}

Let $x=x(S)$ be a number with 
the property that a square of side $x$ intersects at most two consecutive
sides of $S$. 
Divide the plane into squares of side $x$, by a square grid.
There is a constant $k'$ such that any translate of $S$ intersects at most
$k'$ little squares.

For any point $p$, let $S(p)$ denote the translate of $S$ so that its center
is at $p$. 
Let ${\cal H}=\{\ S_i\ |\ i\in I\ \}$ be a collection of translates of $S$
that form a $k=k'm$-fold covering, where 
$m=m(S)$ from Theorem \ref{dualfotetel}.
For every $i\in I$, let $c_i$ be the center of $S_i$.
Let ${\cal H}'=\{\ c_i\ |\ i\in I\ \}$ be the set of centers.
For any point $a$, $a\in S_i$ if and only if $c_i\in S(a)$.
Therefore, for every point $a$, $S(a)$ contains at least $k$ points of 
${\cal H}'$. 

The collection ${\cal H}$
can be decomposed into two
coverings if and only if the set 
${\cal H}'$ 
can be colored with
two colors, such that  every translate of $S$ contains a point of both
colors.

Color the points of  ${\cal H}'$ in each square separately, 
satisfying the conditions of Theorem \ref{dualfotetel}.
Now return to the covering ${\cal H}$ and color each 
translate of $S$ in  ${\cal H}$ to the color of its center. 
We claim that both the red and the blue translates form a covering.
Let $p$ be an arbitrary point, we have to show that it is covered by a
translate of both colors.
Or equivalently, $S(p)$ contains a point of  ${\cal H}'$ of both colors.
Since $S(p)$ contains at least $k$ points of 
${\cal H}'$, 
it contains at least $k/k'=m$ points in one of the little squares $Q$. 
But $S(p)$ intersects $Q$ ``like a
wedge'' that is, $S(p)\cap Q=E(q)\cap Q$ for some $S$-wedge $E$ and point $q$.
Therefore, by Theorem \ref{dualfotetel}, $S(p)\cap Q$ contains a point of
${\cal H}'$ of both
colors.
$\Box$

\smallskip

Now we ``only''
have to prove Theorem \ref{dualfotetel}. We need some preparation.

\subsection{Some properties of boundary points}

Theorem \ref{dualfotetel} has been proved by J. Pach \cite{P86} in the special
case when ${\cal H}$ is finite. Some parts of our proof are just modifications
of his argument, but some other parts are completely new.

Let $S$ be a centrally symmetric, open or 
closed convex polygon, its vertices 
 $v_1, v_2, \ldots , v_{2n}$ in clockwise direction, the $S$-wedges are
$E_1, E_2, \ldots , E_{2n}$, respectively.
Let $\cal H$ be a bounded point set.

\smallskip

\begin{defi}\label{i-boundary}
If $S$ is closed (resp. open),  
a point $p\in{\cal H}$ is called an
$E_i$-boundary point if 
$E_i(p)\cap{\cal H}=\{ p\}$ (resp. $E_i(p)\cap{\cal H}=\emptyset$).

Let $\mathbb{B}_i=\mathbb{B}_i({\cal H})$ denote the set of $E_i$-boundary
points of  ${\cal H}$.

Let $\mathbb{B}=\mathbb{B}({\cal H})=\cup_{i=1}^{\infty}\mathbb{B}_i$ denote
the set of all boundary points of ${\cal H}$, it is called the boundary of
${\cal H}$.
The other points of ${\cal H}$ are called interior points.
\end{defi}

For every $i$, $1\le i\le 2n$, we introduce 
an ordering of the $E_i$-boundary $\mathbb{B}_i$. These orders together will
give a cyclic ordering of the boundary  $\mathbb{B}$, where some boundary
vertices
appear twice. 
Let $\ell_i$ be a line perpendicular to the angular bisector of $E_i$. Direct
$\ell_i$
so that $E_i$ can be translated to the {\em left} side of
$\overrightarrow{\ell_i}$.
There is a natural ordering of the points of $\overrightarrow{\ell_i}$.
For $x, y\in \overrightarrow{\ell_i}$ we say the $x$ precedes $y$ 
($y$ follows $x$) if the vector $\overrightarrow{xy}$ points to the same
direction as
$\overrightarrow{\ell_i}$.
Orthogonally project the points of $\mathbb{B}_i$ to
$\overrightarrow{\ell_i}$, the image of $p$ is $\pi(p)$

It is easy to see that the map $\pi$ is injective.
If $p_1$, $p_2\in \mathbb{B}_i$ 
and $\pi(p_1)=\pi(p_2)$, then either $p_2\in E_i(p_1)$ or 
 $p_1\in E_i(p_2)$, 
but both of them are impossible since both $p_1$ and $p_2$ are 
$E_i$-boundary points.

\begin{defi}
Let $1\le i\le 2n$. For any two $E_i$-boundary points $p_1$ and $p_2$, 
let $p_1\prec_i p_2$ if and only if $\pi_i(p_1)$ precedes 
 $\pi_i(p_2)$ on $\overrightarrow{\ell_i}$.
\end{defi} 
 
The relation $\pi_i$ is a linear ordering on  $\mathbb{B}_i$.
Based on $\pi_i$,
we can define intervals on $\mathbb{B}_i$, for example,
$$[p_1,p_2]=\{p\in \mathbb{B}_i : \pi_i(p)\in [\pi_i(p_1), \pi_i(p_2)]\}.$$
We say that the {\em first half} of the interval $[p_1,p_2]$ is
$$\left\{ p\in \mathbb{B}_i : \pi_i(p)\in \left[ \pi_i(p_1),
    {\pi(p_1)+\pi_i(p_2)\over 2}\right] \right\}.$$
We define  $(p_1,p_2)$ similarly as $[p_1,p_2]$, and we define 
the {\em second half} of an interval similarly as the first half.

\begin{claim}\label{i,i+1hatar}
Suppose that $p\in\mathbb{B}_i$ and $p\in\mathbb{B}_{i+1}$, that is, 
$p$ is a boundary point with respect to both $E_i$ and $E_{i+1}$.
Let $\ell$ be the line through $p$, parallel to $v_iv_{i+1}$.
Then one of the closed halfplanes defined by $\ell$ contains all points of
${\cal H}$.
\end{claim}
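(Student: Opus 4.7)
The plan is to show that the two wedges $E_i(p)$ and $E_{i+1}(p)$ together cover one of the two closed halfplanes bounded by $\ell$. Since neither wedge meets $\mathcal{H}$ outside $\{p\}$ and $p \in \ell$, this immediately forces all of $\mathcal{H}$ into the opposite closed halfplane, which is exactly what the claim asserts.

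For the setup I would normalize coordinates so that $v_iv_{i+1}$ lies on the $x$-axis and $S$ lies in the closed lower halfplane; then $\ell$ is horizontal through $p$. Reading off Definition \ref{wedgedef}, the wedge $E_i(p)$ is bounded by the positive $x$-ray from $p$ (the translate of $\overrightarrow{v_iv_{i+1}}$) and by a ray from $p$ into the lower halfplane (the translate of $\overrightarrow{v_iv_{i-1}}$, since $v_{i-1}$ lies there); symmetrically, $E_{i+1}(p)$ is bounded by the negative $x$-ray from $p$ and by a second ray into the lower halfplane. Thus both wedges are contained in the closed lower halfplane. Writing $\alpha_j$ for the interior angle of $S$ at $v_j$, the angular sector occupied by $E_i(p)$ about $p$ is $[-\alpha_i, 0]$, and that of $E_{i+1}(p)$ is $[-180, -180 + \alpha_{i+1}]$. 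Their union equals the full lower sector $[-180, 0]$ precisely when $\alpha_i + \alpha_{i+1} \ge 180$.

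The main obstacle is verifying this angle inequality, which is where the central symmetry of $S$ is indispensable. Central symmetry forces $\alpha_{j+n} = \alpha_j$, so the total interior angle sum $(2n-2)\cdot 180$ of the $2n$-gon collapses to $\sum_{j=1}^{n} \alpha_j = (n-1)\cdot 180$. Combined with strict convexity $\alpha_j < 180$, this gives $\alpha_i + \alpha_{i+1} \ge (n-1)\cdot 180 - (n-2)\cdot 180 = 180$ for every consecutive pair, with the wraparound case $i=n$ handled via $\alpha_{n+1} = \alpha_1$. (The inequality genuinely uses central symmetry; the equilateral triangle, for instance, has consecutive interior angles summing to $120 < 180$, which is why the claim would fail there.) Once $\alpha_i + \alpha_{i+1} \ge 180$ is established, $E_i(p) \cup E_{i+1}(p)$ is the entire closed lower halfplane, and the conclusion outlined in the first paragraph completes the proof.
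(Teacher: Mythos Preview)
Your argument is correct and follows the same route as the paper: show that $E_i(p)\cup E_{i+1}(p)$ contains the (open) halfplane bounded by $\ell$ on the $S$-side, so that the boundary conditions on $p$ force $\mathcal{H}$ into the opposite closed halfplane. The paper simply asserts this containment in one line, whereas you actually justify it via the interior-angle inequality $\alpha_i+\alpha_{i+1}\ge 180^\circ$, derived from central symmetry through $\sum_{j=1}^{n}\alpha_j=(n-1)\cdot 180^\circ$ and $\alpha_j<180^\circ$; your triangle remark correctly shows this step genuinely needs central symmetry, so your write-up is in fact more complete than the paper's at this point.
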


\begin{proof}
If $p$ is a boundary point with respect to both $E_i$ and $E_{i+1}$, then 
$E_i(p)\cap{\cal H}=E_{i+1}(p)\cap{\cal H}=\emptyset$ if $S$ is open, and 
$\{ p\}$ if $S$ is closed. 
But $E_i(p)\cup E_{i+1}(p)$ contains an open halfplane bounded by $\ell$. 
This halfplane does contain any point of ${\cal H}$, therefore, its complement
satisfies the conditions.
\end{proof}

It is easy now that if $S$ is closed, then there is at most one point
$p\in{\cal H}$ that is a boundary point with respect to both $E_i$ and
$E_{i+1}$.
If $S$ is open and both $p$ and $q$ are such  boundary points,
then $p\prec_iq$ if and only if  $p\prec_{i+1}q$.
It also follows from Claim \ref{i,i+1hatar} that if
$p$ is a boundary point with respect to both $E_i$ and
$E_{i+1}$ and  
$q$ is a boundary point with respect to $E_i$ but not 
$E_{i+1}$, then  $p\prec_iq$.

There could be other types of boundary points with respect to more than one 
wedge.

\begin{defi}
A point $p\in{\cal H}$ is a singular boundary point if there are numbers 
$1\le i_1<n_1<i_2<n_2\le 2n$, or 
$1\le n_1<i_1<n_2<i_2\le 2n$ such that
$p$ is a boundary point with respect to $E_{i_1}$ and
$E_{i_2}$, but not a boundary point with respect to $E_{n_1}$ and
$E_{n_2}$,  see in Figure \ref{singular}.
Non-singular boundary points are called regular boundary points.
\end{defi}

\begin{figure}[H]
\begin{center}
\includegraphics[height=40mm]{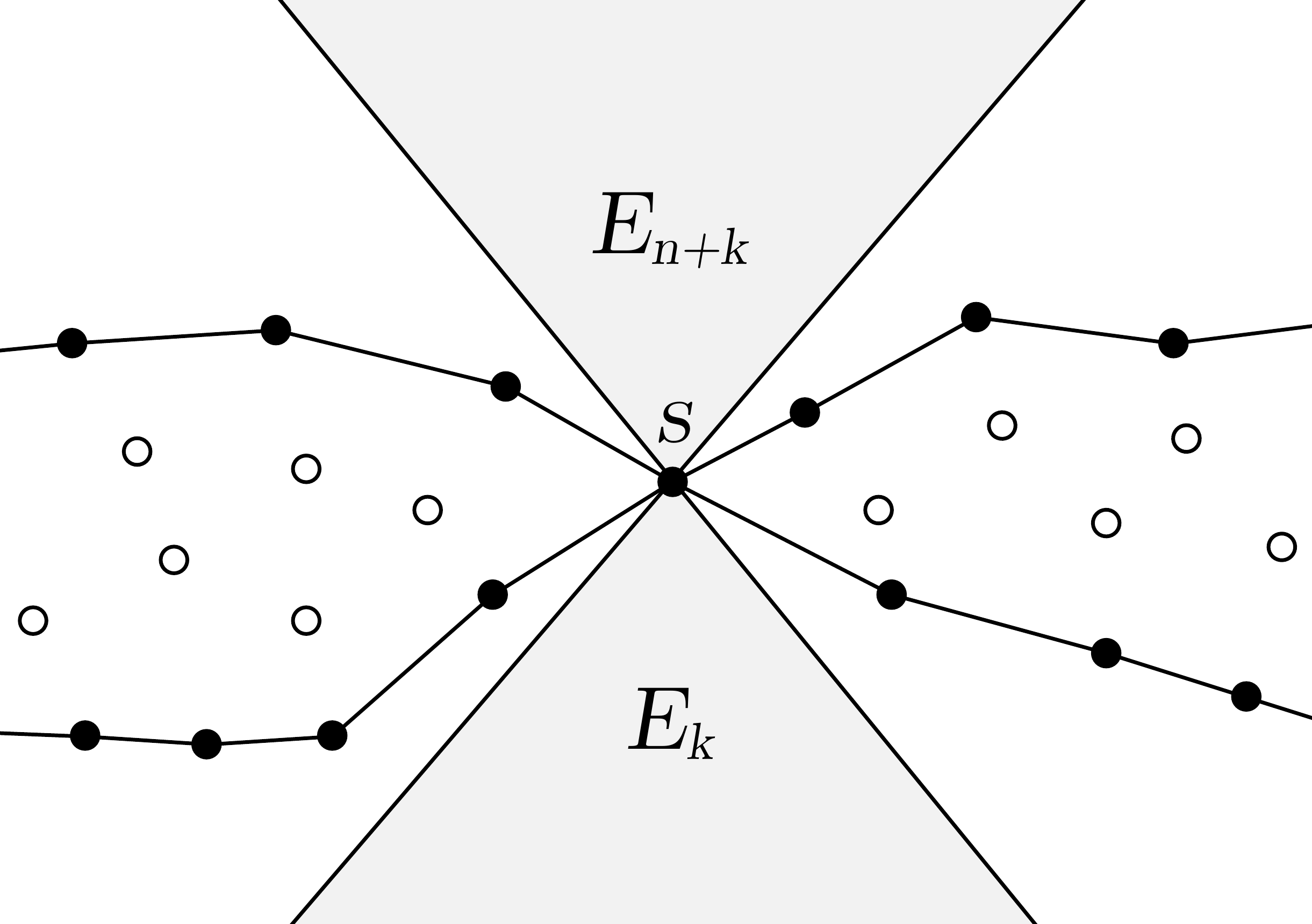}
\caption{$s$ is a singular boundary point}
\label{singular}
\end{center}
\end{figure}

This concept, just like the next two claims, are basically again from
\cite{P86}.

\begin{claim}\label{E_iE_{i+n}}
If $p$ is a singular $E_i$-boundary point, then 
it is a boundary point with respect to  $E_i$  and $E_{i+n}$ (the reflection
of $E_i$) and no other wedge.
\end{claim}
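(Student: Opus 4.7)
The plan is to show that $I(p) := \{j : p \in \mathbb{B}_j\}$ equals $\{i, i+n\}$ in the singular case by combining a geometric ``Key Lemma'' with a short combinatorial counting argument. The Key Lemma states: if $a, b \in I(p)$ have cyclic distance at most $n-1$ in $\mathbb{Z}/2n\mathbb{Z}$, then every index $c$ on the shorter cyclic arc from $a$ to $b$ also lies in $I(p)$. This reduces to the geometric inclusion $E_c(p) \subseteq E_a(p) \cup E_b(p)$: since $\mathcal{H} \setminus \{p\}$ avoids the right side by hypothesis, it also avoids the left, so $p$ is $E_c$-boundary as well. Writing $\beta_j$ for the angular bisector direction of $E_j(p)$ and $\alpha_j$ for its opening angle, a direct computation gives the bisector step $\Delta_j := \beta_j - \beta_{j+1} = \pi - (\alpha_j + \alpha_{j+1})/2$. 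Convexity ($\alpha_j < \pi$) then implies that both endpoints of the angular arc of $E_c(p)$ are monotone in $c$, so the arc of $E_c(p)$ is sandwiched between those of $E_a(p)$ and $E_b(p)$ whenever $a \leq c \leq b$. What remains is to check that $E_a(p) \cup E_b(p)$ is a single connected arc, which reduces to $\sum_{j=a}^{a+k} \alpha_j \geq k\pi$, equivalently that any $k+1 \leq n$ consecutive exterior angles of $S$ sum to at most $\pi$. This is where central symmetry enters: $\alpha_{j+n} = \alpha_j$ forces the $n$ consecutive exterior angles on each half of $S$ to sum to exactly $\pi$ (half of the total $2\pi$), so fewer than $n$ consecutive ones sum to at most $\pi$.

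Given the Key Lemma, I would decompose $I(p)$ into its maximal cyclic arcs $A_1, \ldots, A_m$. Because $p$ is singular, $I(p)$ is not a single cyclic arc, and $m \geq 2$. For any $a \in A_s$ and $b \in A_t$ with $s \neq t$, the cyclic distance $d(a, b)$ must be at least $n$, for otherwise the Key Lemma would fill in the short arc between them and merge $A_s$ with $A_t$. Since the maximum cyclic distance in $\mathbb{Z}/2n\mathbb{Z}$ is $n$, we get $d(a, b) = n$, i.e., $b = a + n$. If some arc contained two consecutive indices $a$ and $a+1$, then for any $b$ in a different arc we would need $d(a, b) = d(a+1, b) = n$ simultaneously, which is impossible since these distances differ by exactly one. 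Hence every arc is a singleton, and three or more pairwise distance-$n$ singletons cannot coexist in $\mathbb{Z}/2n\mathbb{Z}$ (any two such singletons already determine each other via $b = a+n$). Therefore $m = 2$ and $I(p) = \{i, i+n\}$.

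The main obstacle is the Key Lemma. The combinatorial argument afterwards is brief once the lemma is in hand, but verifying $E_c(p) \subseteq E_a(p) \cup E_b(p)$ for a general (not necessarily regular) centrally symmetric convex polygon requires the careful bookkeeping of the bisector shifts $\Delta_j$ and opening angles $\alpha_j$ described above. The essential use of central symmetry is the ``half-polygon'' identity $\sum_{j=0}^{n-1}(\pi - \alpha_j) = \pi$ that makes the short-arc overlap work; this identity fails exactly when $a$ and $b$ are opposite (cyclic distance $n$), leaving that case as the only way $p$ can be singular.
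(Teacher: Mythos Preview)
Your proof is correct and follows the same route as the paper: both rest on the inclusion $E_c(p)\subseteq E_a(p)\cup E_b(p)$ for $c$ on the short cyclic arc between $a$ and $b$, then derive a contradiction with singularity. The paper's version is terser---it asserts the inclusion ``by the convexity of $S$'' without the angular bookkeeping, and leaves the final reduction to $|I(p)|=2$ implicit---whereas you spell out where central symmetry enters (the half-polygon exterior-angle identity) and give an explicit arc-decomposition argument, but the substance is identical.
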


\begin{proof}
Suppose that $1\le i_1<n_1<i_2<n_2\le 2n$,
$p$ is a boundary point with respect to $E_{i_1}$ and
$E_{i_2}$, but it is not a 
 boundary point with respect to $E_{n_1}$ and
$E_{n_2}$, and $i_1+n\neq i_1$. Assume wlog. that $i_1=1$, $i_1=k<n$. 
Then $E_1(p)$ and $E_k(p)$ do not contain any point of ${\cal H}$, different
from $p$.
It follows from the convexity of $S$, that 
$E_{n_1}(p)\subset E_1(p)\cup E_k(p)$, therefore, $p$ 
 is a boundary point with respect to $E_{n_1}$,
a contradiction, see in Figure \ref{singpprop}a.
The argument is the same if we have
$1\le n_1<i_1<n_2<i_2\le 2n$.
\end{proof}

Now we show the all singular boundary points are of the ``same type''.

\begin{claim}\label{egyfele_szing}
If $p$ is a singular boundary point with respect to $E_i$ and $E_{i+n}$, then 
there is no singular boundary point with respect to some other pair of wedges.
\end{claim}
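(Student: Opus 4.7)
My plan is to argue by contradiction: assume $p$ is singular for $(E_i,E_{i+n})$ and $q\neq p$ is singular for $(E_j,E_{j+n})$ with $j\not\equiv i\pmod n$, and show that the vector $d:=q-p$ would have to avoid four closed wedges that in fact cover the entire plane. From $E_i(p)\cap{\cal H}=E_{i+n}(p)\cap{\cal H}=\{p\}$ and $q\in{\cal H}\setminus\{p\}$ we obtain $d\notin E_i\cup E_{i+n}$; the same reasoning at $q$, combined with the central-symmetry identity $E_{k+n}=-E_k$, yields $d\notin E_j\cup E_{j+n}$.

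The geometric heart of the proof will be to show that $E_i\cup E_{i+n}\cup E_j\cup E_{j+n}=\mathbb{R}^2$ whenever $i\not\equiv j\pmod n$. To prove this I would introduce the \emph{bowties} $B_k:=\mathbb{R}^2\setminus(E_k\cup E_{k+n})$ for $k=1,\dots,n$. Each $B_k$ is a pair of opposite open wedges of angular width $\pi-\alpha_k$, bounded by lines parallel to the two edges of $S$ meeting at $v_k$ (where $\alpha_k$ is the interior angle of $S$ at $v_k$). The total angular measure of the $n$ bowties is $\sum_{k=1}^n 2(\pi-\alpha_k)=2n\pi-2(n-1)\pi=2\pi$, which is suggestive. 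I would then upgrade this count to an actual tiling by a sweep argument on $A(d):=\{k:d\in E_k\}$ as $d$ rotates through $S^1$. The key invariant is that $A(d)$ is always a block of $n-1$ consecutive indices modulo $2n$: it shifts by one position in the cyclic order precisely when $d$ crosses a direction parallel to an edge of $S$. Consequently the complement of $A(d)$ is a block of $n+1$ consecutive indices, and such a block contains exactly one opposite pair $\{k,k+n\}$ — namely its two endpoints — which is the unique bowtie containing $d$.

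Once this tiling is established, $B_i$ and $B_j$ are disjoint open sets for $i\not\equiv j\pmod n$, which is equivalent to $E_i\cup E_{i+n}\cup E_j\cup E_{j+n}=\mathbb{R}^2$, contradicting the first paragraph's conclusion that $d$ lies outside all four closed wedges. The step I expect to be the main obstacle is executing the combinatorial sweep cleanly for an arbitrary (not necessarily regular) centrally symmetric convex polygon: one has to pin down the initial value $|A(d)|=n-1$ for some base direction and verify that each edge-parallel transition is indeed a shift-by-one of the consecutive block — this is where central symmetry enters crucially, since it forces the wedge leaving $A$ to be matched with an adjacent index entering $A$. Once that bookkeeping is in place, the contradiction is immediate.
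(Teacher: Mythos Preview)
Your argument is correct and shares the paper's core: both derive that the direction $d=q-p$ must lie in $B_i=\mathbb{R}^2\setminus(E_i\cup E_{i+n})$ (from the constraint at $p$) and simultaneously in $B_j$ (from the constraint at $q$, via $E_{k+n}=-E_k$), and then show $B_i\cap B_j=\emptyset$ for $i\not\equiv j\pmod n$. The difference is in how that last step is carried out. The paper simply writes down the angular interval constituting each half of $B_1$ and each half of $B_k$ and checks directly that they can overlap only at a shared endpoint (which happens only when $k=2$ and $d$ is parallel to the edge $v_1v_2$); it then disposes of that boundary case with one extra sentence. Your sweep through $A(d)=\{k:d\in E_k\}$ and the resulting tiling of the circle of directions by the bowties $B_1,\dots,B_n$ is a more structural route to the same disjointness; it yields a pleasant global picture (and your count $|A(d)|=n-1$ with a consecutive block is indeed correct for generic $d$), but it is more machinery than this claim requires. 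One caveat: as written you treat only the closed case, where the $E_k$ are closed, the $B_k$ are open, and no boundary overlap can occur. Section~2.2 of the paper is stated for open and closed $S$ simultaneously, and in the open case the now-closed bowties can touch along an edge direction---which is exactly why the paper's proof isolates the $k=2$ boundary situation at the end.
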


\begin{proof}
Suppose that $p$ and $q$ are singular boundary points with respect to
different pairs of wedges, say, $p$ with respect to $E_1$ and $E_{n+1}$, 
$q$ with respect to $E_k$ and $E_{n+k}$, $1<k\le n$. 
It follows that either 
\begin{equation}\label{v_1v_2pq}
arg(\overrightarrow{v_1v_2})\le arg(\overrightarrow{pq})\le
arg(\overrightarrow{v_{2n}v_1}),
\end{equation}
or
\begin{equation}\label{v_1v_2qp}
arg(\overrightarrow{v_1v_2})\le arg(\overrightarrow{qp})\le
arg(\overrightarrow{v_{2n}v_1}).
\end{equation}
Suppose wlog. that  \eqref{v_1v_2pq} holds. Since $q$ is a boundary point with
respect to  $E_k$ and $E_{n+k}$, 
\begin{equation}\label{v_{k-1}v_kpq}
arg(\overrightarrow{v_{k-1}v_{k}})\le arg(\overrightarrow{pq})\le arg(\overrightarrow{v_{k}v_{k+1}}).
\end{equation}
(Note, that if $S$ is closed then the above inequalities are strict
inequalities.)
But \eqref{v_1v_2pq} and \eqref{v_{k-1}v_kpq} can simultaneously only if  
$k=2$ and  
$$arg(\overrightarrow{v_1v_2})=arg(\overrightarrow{pq}).$$
But in this case, $q$ is also an $E_1$-boundary point, so it is not singular,
a
contradiction see in Figure \ref{singpprop}b. 
\end{proof}

\begin{figure}[H]
\centering
\subfigure[Singular point there is only with opponent wedge pair]
{
\includegraphics[height=40mm]{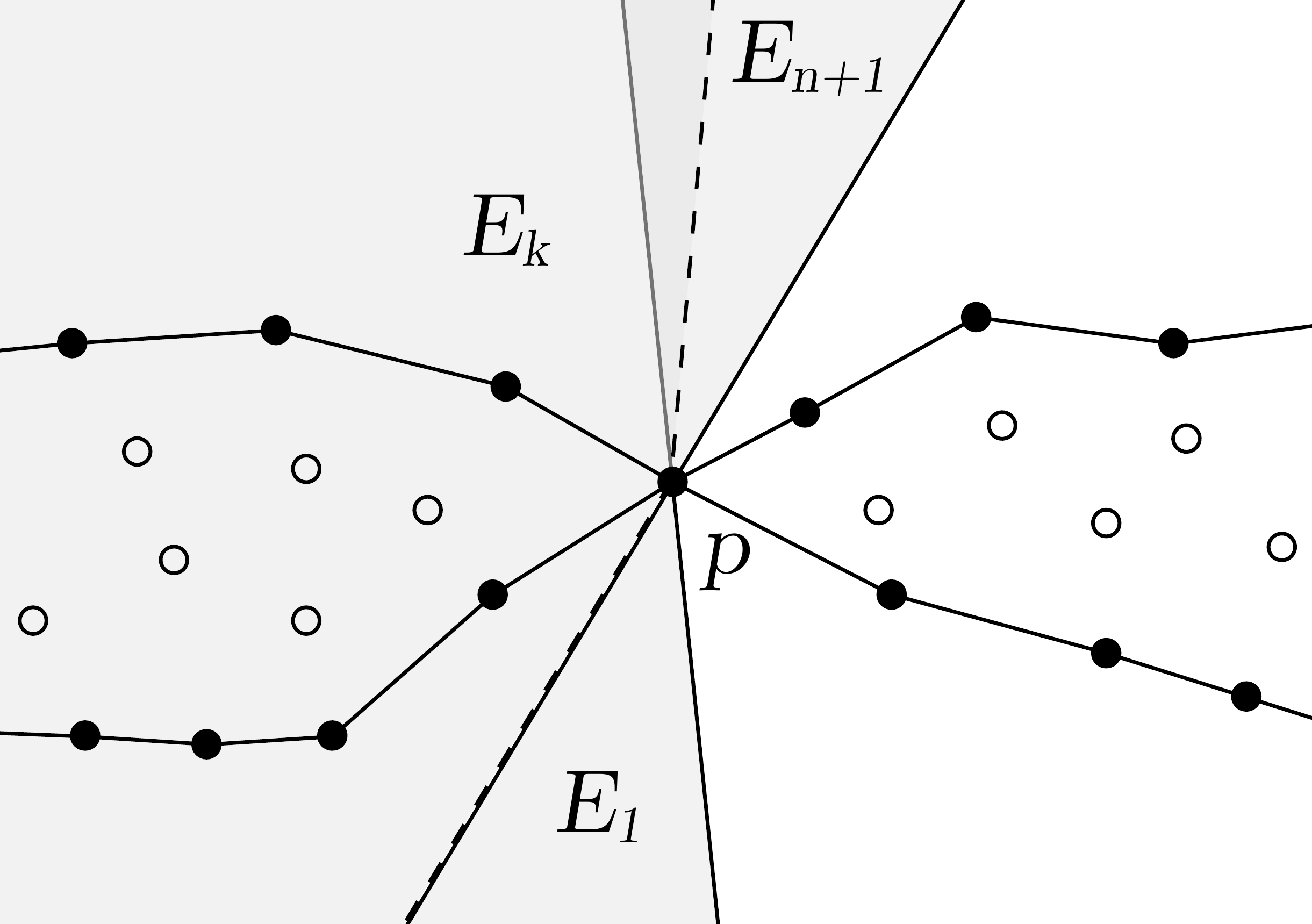}
}
\subfigure[All singular points are same type]
{
\includegraphics[height=40mm]{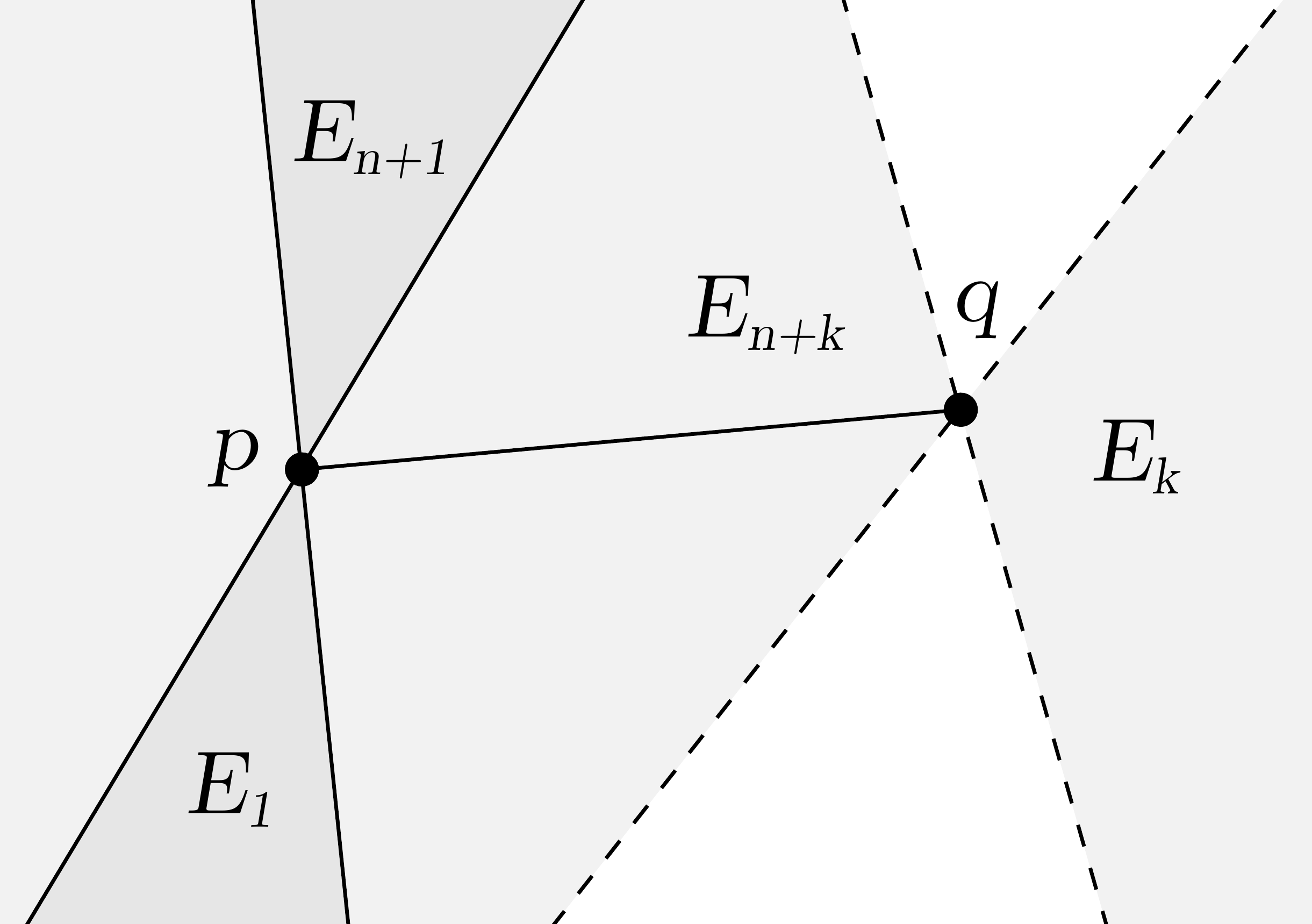}
}
\caption{Properties of singular boundary points}\label{singpprop}
\end{figure}

From now on, suppose, without loss of generality, that all singular boundary points 
of ${\cal H}$
are $E_1$- and $E_{n+1}$-boundary points. 
Observe, that if $p$ and $q$ are singular boundary points with respect to
$E_1$ and $E_{n+1}$, then $p\prec_1 q \Leftrightarrow q\prec_{n+1} p$.
The {\em type} of a boundary point $p$ is the smallest $i$ such that 
$p$ is an $E_i$-boundary point. 

In the set 
$\mathbb{B}$
of  boundary points, substitute each singular boundary point $p$ by 
$p'$ and $p''$, such that $p'$ is an $E_1$-boundary point, $p''$ is an
$E_{n+1}$-boundary point. Let $\mathbb{B}'$ be the resulting set. 
For $p$, $q\in \mathbb{B}'$, let $p\prec q$ if

\begin{itemize}

\item $p$ is of type $i$, $q$ is of type $j$, and $1 \leq i<j \le 2n$,

\item Both $p$ and $q$ are of type $i$, and $p\prec_i  q$.

\end{itemize}

Relation $\prec$ gives a linear ordering on 
$\mathbb{B}'$.
We have the elements in the following order:

\begin{itemize}

\item Boundary points with respect to both $E_{2n} $ and $E_1$, ordered 
according to $\prec_{2n}$ and $\prec_1$;

\item $E_1$-boundary points, ordered according to $\prec_1$;

\item Boundary points with respect to both $E_{1} $ and $E_2$, ordered 
according to $\prec_{1}$ and $\prec_2$;

\item $E_2$-boundary points, ordered according to $\prec_2$;

\item Boundary points with respect to both $E_{2} $ and $E_3$, ordered 
according to $\prec_{2}$ and $\prec_3$;

\item $E_3$-boundary points, ordered according to $\prec_3$;

\item $\ldots$

\item $E_{2n}$-boundary points, ordered according to $\prec_{2n}$.

\end{itemize}

If we project the points of $\mathbb{B}'$ on a circle, then there is a natural
way to define {\em intervals} on  $\mathbb{B}'$, and then also on 
$\mathbb{B}$. 
No we define them precisely.

\begin{defi}
An $I\subset \mathbb{B}'$ subset is called an {\em interval} of $\mathbb{B}'$,
if one of the following two conditions hold.

(i) If $p\prec q\prec r$ and $p, r\in I$, then $q\in I$.

\smallskip

(ii) If $p\prec r$, $p, r\in I$, and either $q\prec p$ or $r\prec q$, then $q\in I$.

A subset $I\subset \mathbb{B}$ is  called an {\em interval} of $\mathbb{B}$ if
the corresponding subset 
$I'\subset \mathbb{B}'$ is an {\em interval} of $\mathbb{B}'$.

An interval of  $\mathbb{B}$ or  $\mathbb{B}'$ is 
called {\em homogeneous} if all its points are $E_i$-boundary points, for some
$i$.
\end{defi}

\begin{claim}\label{2interval}
A translate of an $S$-wedge $E_i$ intersects  $\mathbb{B}$
in at most two intervals.
\end{claim}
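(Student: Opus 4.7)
The plan is to identify the cyclic order $\prec$ on $\mathbb{B}'$ with the cyclic order in which the points of $\mathbb{B}$ appear along the boundary of the convex hull $C$ of $\mathcal{H}$, and then to invoke the following convex-geometry fact: a convex region $E_i(q)$ is bounded by two halflines emanating from the apex $q$; each halfline meets the simple closed convex curve $\partial C$ in at most two points, so the resulting at most four crossings partition $\partial C$ into at most four arcs that alternate between being inside and outside $E_i(q)$, leaving at most two arcs --- hence at most two cyclic intervals of $\mathbb{B}$ --- inside $E_i(q)$.

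To set this up, I would first check that every boundary point lies on $\partial C$: if $p\in\mathbb{B}_i$, then no other point of $\mathcal{H}$ lies in $E_i(p)$, so $p$ is (weakly) extremal in any direction interior to the cone dual to $E_i$, which places $p$ on $\partial C$. Next I would verify that, for each $i$, the points of $\mathbb{B}_i$ appear along $\partial C$ as a single contiguous arc on which $\prec_i$ (projection onto the line perpendicular to the bisector of $E_i$) agrees with the clockwise traversal of that arc. Concatenating these arcs in cyclic succession for $i=1,2,\ldots,2n$, and using Claims \ref{E_iE_{i+n}} and \ref{egyfele_szing} to place the two copies $p'$ and $p''$ of a singular boundary point at their geometrically correct positions in the $E_1$- and $E_{n+1}$-arcs respectively, reproduces the cyclic order $\prec$ on $\mathbb{B}'$ as the clockwise cyclic order around $\partial C$.

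With the identification in hand, the geometric core is immediate: the two halflines bounding $E_i(q)$ together meet $\partial C$ in at most four points, so they divide $\partial C$ into at most four arcs; since $E_i(q)$ is convex and $\partial C$ is a simple closed convex curve, consecutive arcs alternate between lying inside and outside $E_i(q)$, so at most two arcs lie inside. Translating back through the correspondence, $E_i(q)\cap\mathbb{B}$ is a union of at most two intervals of $\mathbb{B}$.

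The main obstacle I expect is step two --- carefully matching $\prec$ to the clockwise cyclic order on $\partial C$, particularly at transitions where a (non-singular) boundary point may belong to several consecutive $\mathbb{B}_i$'s and at singular boundary points where the two copies $p',p''$ must be correctly positioned. Once this bookkeeping is handled, the claim follows from the elementary observation that two halflines sharing an apex meet a simple closed convex curve in at most four points.
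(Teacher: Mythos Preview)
Your approach has a genuine gap: it is \emph{not} true that every $E_i$-boundary point of $\mathcal{H}$ lies on the boundary of the convex hull $C$ of $\mathcal{H}$. Take $S$ to be the axis-parallel square with vertices $v_1=(1,1)$, $v_2=(1,-1)$, $v_3=(-1,-1)$, $v_4=(-1,1)$ (clockwise), so that $E_1$ is the closed ``southwest'' quadrant, $E_1(p)=\{(x,y):x\le p_x,\ y\le p_y\}$. Let
\[
\mathcal{H}=\{(0,0),\ (1,-10),\ (-10,1),\ (10,10)\}.
\]
Then $E_1((0,0))\cap\mathcal{H}=\{(0,0)\}$, so $(0,0)\in\mathbb{B}_1$; but $(0,0)=\tfrac{10}{29}(1,-10)+\tfrac{10}{29}(-10,1)+\tfrac{9}{29}(10,10)$ lies in the interior of $C$. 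More generally, for a fixed $i$ the set $\mathbb{B}_i$ is a monotone ``staircase'' (the Pareto-minimal points for the partial order induced by the cone $E_i$), and such a staircase typically dips well into the interior of the convex hull. The step where you assert that $p\in\mathbb{B}_i$ is ``(weakly) extremal in any direction interior to the cone dual to $E_i$'' fails because $E_i(p)\cap\mathcal{H}=\{p\}$ only constrains points of $\mathcal{H}$ that lie in $E_i(p)$; points outside $E_i(p)$ can still have larger inner product with any such direction.

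Since $\mathbb{B}$ does not sit on a convex curve, the ``two halflines meet $\partial C$ in at most four points'' argument cannot be applied. The paper instead argues directly with the cyclic order $\prec$: fixing a wedge, say $E_2(z)$, it shows via the inequalities on $\arg(\overrightarrow{pq})$ that if $p\in E_2(z)\cap\mathbb{B}$ and $q\in\mathbb{B}$ lies (in the order $\prec$) between $p$ and the block of $E_2$- or $E_{n+2}$-boundary points, then $q\in E_2(z)$ as well. This yields one interval among the types $\{2,3,\ldots,n+2\}$ and one among $\{n+2,\ldots,2n,1,2\}$, hence at most two intervals in total. That argument uses only the monotone-staircase structure of each $\mathbb{B}_i$ and the way consecutive staircases fit together, not convexity of any curve through $\mathbb{B}$.
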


\begin{proof}
Consider a translate of an $S$-wedge, say, $E_2(z)$. Suppose that $p$ is an
$E_i$-boundary point, $q$ is an $E_j$-boundary point, 
$3\le i,j\le n+1$, 
$p\in E_2(z)$ and $p\succ q$. Then
$$arg(\overrightarrow{v_2v_1})\le
arg(\overrightarrow{pq})\le
arg(\overrightarrow{v_2v_3}),$$
so 
$q\in E_2(z)$.
We can argue similarly if $p$ and $q$ are on the ``other side'', that is, 
$i, j\in \{ n+3, n+4, \ldots , 2n, 1 \}$.

Suppose now that $p$ is an $E_i$-boundary point, $q$ is an $E_2$-boundary point,
$3\le i\le n+1$, 
$p, q\in E_2(z)$ and $p\succ r\succ q$. Then again
$$arg(\overrightarrow{v_2v_1})\le
arg(\overrightarrow{pr})\le
arg(\overrightarrow{v_2v_3}),$$
therefore, $r\in E_2(z)$.
Again, we can argue similarly in the case 
$i\in \{ n+3, n+4, \ldots , 2n, 1 \}$.

Finally, suppose that 
$p$, $q$, and $r$ are $E_2$-boundary points, 
$p, r\in E_2(z)$ and $p\succ q\succ r$. Again, it is easy to check that 
$r\in E_2(z)$.
The same holds if 
$p$, $q$, and $r$ are $E_{n+2}$-boundary points.

It follows from these observations that $E_2(z)$ intersects  $\mathbb{B}$
in at most two intervals.
\end{proof}

\begin{figure}[H]
\begin{center}
\includegraphics[height=40mm]{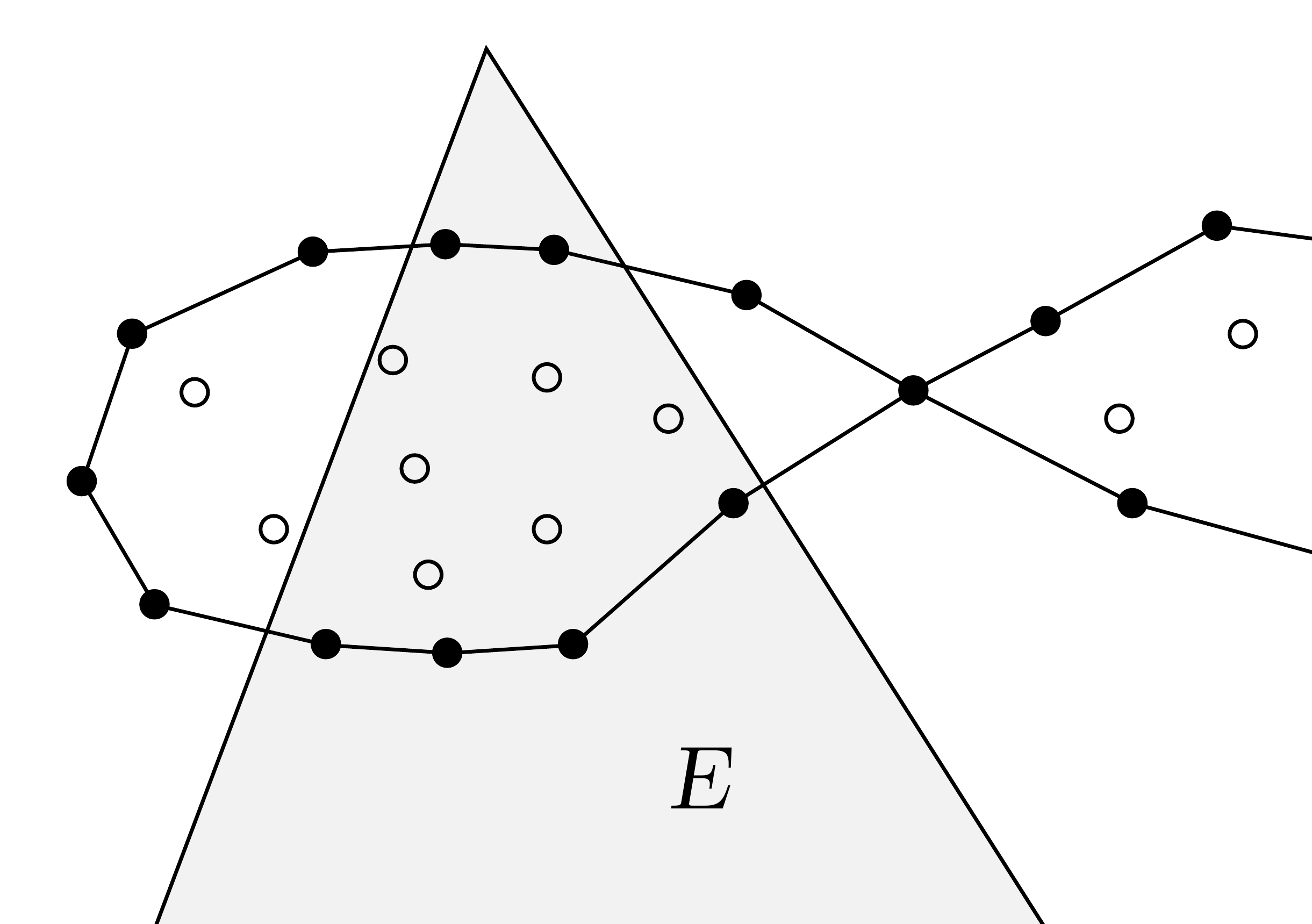}
\caption{$E$ intersects  $\mathbb{B}$
in at most two intervals}
\label{interval}
\end{center}
\end{figure}




\subsection{Coloring algorithm}

Two boundary points are neighbors if there is no other boundary point between
them. More precisely:

\begin{defi}
Two boundary points $p, q\in \mathbb{B}'$ are {\em neighbors} if $p\prec q$,
and either 
(i) there is no $r$ with $p\prec r\prec q$, 
or
(ii) there is no $r$ with $r\prec p$ or $q\prec r$.

Two boundary points $p, q\in \mathbb{B}$ are {\em neighbors} if the
corresponding points in $\mathbb{B}'$
are neighbors.
Let $p\sim q$ denote that $p$ and $q$ are neighbors.
\end{defi}

Let $\approx$ be the transitive closure of the relation $\sim$ on 
$\mathbb{B}$, that is, 
$p\approx q$ if and only if there is a finite sequence of boundary points,
starting with $p$, ending with $q$, such that the consecutive pairs are neighbors.
The relation $\approx$ is an equivalence relation.
Those boundary points $p$ which belong to an equivalence class of size one,
are called {\em lonely} boundary points. The others, which have a neighbor,
are called {\em social} boundary points.

First we give a coloring procedure which colors the points black and white. 
Then we apply it several times to obtain our red-blue coloring.

\bigskip

\noindent {\sc Black-White-Boundary-Coloring($S, {\cal H}$)}


\begin{itemize}

{\sl 

\item[] Divide the boundary of ${\cal H}$, $\mathbb{B}$, into equivalence
classes by relation $\approx$. 
First we color the social boundary points.

Let $C$ be an arbitrary equivalence class, $|C|>1$.
If $C$ contains singular boundary points, then color them first,
so that consecutive points receive different colors.
Then, if there are regular boundary points  
between two consecutive singular boundary points, 
color them so that no two consecutive boundary points are black and 
no three consecutive are white.
Do the same for each equivalence class  $|C|>1$.

Now we color the lonely boundary points, denote their set by 
$\mathbb{B}_{lonely}$. It is the union of at
most $2n$ homogeneous intervals, that is,
$\mathbb{B}_{lonely}=\cup_{i=1}^{2n}I_i$ where the elements of $I_i$ are all
$E_i$-boundary points. We color each interval separately.
Recall that, based on projection $\pi_i$, we defined the midpoint, the first
and the second half of a  homogeneous interval.

For each $i$, consider interval $I_i$. If it contains infinitely many points,
color one of them black. Then again, for each $i$, if $I_i$ contains
infinitely many points,
color an uncolored one white.
If it contains finitely many points, color all of them white.
Now half each interval which contained infinitely many points, and drop
intervals with finitely many points. Let 
$J_1$, $J_2$, $\ldots $, $J_m$ be the set of new intervals. Repeat the
previous step, choose an uncolored point in each of the intervals with
infinitely many points, and color them black, then 
the same with white, and then color all uncolored points in intervals with
finitely many points white. 
Repeat this infinitely many times.

Then, if there is still an uncolored point, color it white. 

}

\end{itemize}

\begin{claim}\label{mindketszin} 
If there are infinitely many points in an interval of $\mathbb{B}$, then it
contains infinitely many points of both colors. 
\end{claim}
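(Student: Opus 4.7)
Let $I$ be an interval of $\mathbb{B}$ with $|I|=\infty$. The idea is to split into cases according to whether $I$ contains infinitely many social boundary points or infinitely many lonely ones; at least one of these must hold.

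In the social case, I would use the structure of the equivalence classes of $\approx$, which are contiguous subsets of $\mathbb{B}'$ in the order $\prec$. Each class $C$ of size at least two is coloured so that no two consecutive points are both black and no three consecutive are all white, which forces every three consecutive points of $C$ to contain both colours. If some class $C$ has $|I\cap C|=\infty$, then $I\cap C$ is an infinite sub-interval of $C$ and the conclusion is immediate. Otherwise $I$ meets infinitely many classes; since $I$ has only two endpoints, at most two of these classes can be cut by $I$ (i.e.\ satisfy $\emptyset\neq I\cap C\subsetneq C$), so cofinitely many lie entirely inside $I$. Each such class contributes at least one point of each colour (adopting the mild convention that size-two classes are coloured BW rather than WW), so $I$ accumulates infinitely many of each.

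In the lonely case, the pigeonhole principle gives an index $i$ with $|I\cap I_i|=\infty$. Since $\pi_i$ is injective, $\pi_i(I\cap I_i)$ lies in a non-degenerate sub-interval $[\alpha,\beta]$ of $\pi_i(I_i)$ with $\alpha<\beta$. I would trace the dyadic halving tree of $I_i$ built by the procedure and analyse its live (infinite) sub-intervals. If $I\cap I_i$ has an accumulation point $c\in(\alpha,\beta)$, then for $k$ sufficiently large the level-$k$ sub-interval $J_k$ containing $c$ has projection length less than $\min(c-\alpha,\beta-c)$, hence $J_k\subset(\alpha,\beta)\subset I$, and $J_k$ is live since $c$ is an accumulation point. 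Its infinite chain of live descendants all sit inside $I$, each receiving one black and one white point at its round, yielding infinitely many of each colour in $I$. If instead all accumulation happens at the boundary of $[\alpha,\beta]$, say at $\alpha$ from above, then the level-$k$ sub-interval $A_k$ containing $\alpha$ is live but straddles the boundary of $I$; its immediate right neighbour in the halving tree is entirely in $I$, reducing us to the previous case when infinite, and contributing at least one white per level (via the all-white rule) when finite. For the blacks inside $I$, I would instruct the coloring step to pick, in each straddling live sub-interval, its black and its white on opposite sides of the upcoming halving midpoint, so that over the infinitely many levels points of both colours accumulate on the $I$-side of $\alpha$.

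The main obstacle is exactly this boundary-accumulation subcase: no live sub-interval is entirely inside $I$, so I need to specify the coloring choices carefully enough that infinitely many of the blacks and whites selected in the straddling live sub-intervals land on the $I$-side of the boundary, while simultaneously preserving the ``one black, one white per round per live sub-interval'' bookkeeping across all $2n$ homogeneous intervals.
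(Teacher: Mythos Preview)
Your social case is fine and matches the paper's argument. The lonely case with an interior accumulation point is also fine: eventually a live dyadic sub-interval of $I_i$ lies inside $I$ and produces infinitely many points of each colour. The problem is the boundary-accumulation subcase, and your proposed fix does not work.

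First, modifying the choices made in \textsc{Black-White-Boundary-Coloring} to favour one side of a particular endpoint is not a legitimate move here: the claim is about the procedure as defined, and in any event your instruction (``pick the black and the white on opposite sides of the upcoming midpoint'') is tailored to one specific interval $I$. You would need a single set of choices that works simultaneously for \emph{every} interval $I$ of $\mathbb{B}$, and you give no argument that this can be arranged.

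Second, and more importantly, the bad subcase is vacuous once you use the defining property of lonely points. A lonely point $p$ has no neighbour in $\mathbb{B}'$; unwinding the definition, for every $q\succ p$ there is $r$ with $p\prec r\prec q$, so points of $\mathbb{B}$ accumulate at $p$ from the right (and symmetrically from the left) in the order $\prec$. Now if $I\cap I_i$ is infinite, then some lonely point $p$ lies in the \emph{interior} of $I$; hence $p$ is an accumulation point of $\mathbb{B}$ in the interior of $I$. Either infinitely many of the boundary points accumulating at $p$ are lonely---then lonely points accumulate at an interior point and your dyadic argument applies---or infinitely many are social---then $I$ contains infinitely many social points and your social argument applies. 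This is exactly the paper's route: it first reduces to a homogeneous interval, then observes that a lonely point in the interior forces an accumulation point of boundary points in the interior, and splits on whether that accumulation is by lonely or by social points.

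So the missing idea is simply ``lonely $\Rightarrow$ accumulation point of $\mathbb{B}$''. With that observation the boundary-accumulation subcase disappears and no modification of the colouring is needed.
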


\begin{proof}
We can assume that interval $I$ is homogeneous, say, all of its points are
$E_i$-boundary points.
Suppose first that $I$ contains a lonely boundary point $p$ in its interior.
Then there is an accumulation point $q$ of boundary points in the
interior of $I$. (Note that $q$ is not necessarily an element of ${\cal H}$.)
If $q$ is an accumulation point of lonely boundary points, then 
our procedure {\sc Black-White-Boundary-Coloring($S, {\cal H}$)}
will arrive to an interval $J\subset I$ which contains infinitely many
lonely boundary points, and it colors one of them white, one black. 
Moreover, the procedure will find such an interval in infinitely many steps, so
it colors infinitely many points white, and infinitely many black.

If $q$ is an accumulation point of social boundary points, or if $I$ does not
contain
  a lonely boundary point $p$ in its interior, then $I$ contains infinitely 
many social boundary points. Then either $I$ contains three consecutive such
points, or contains two consecutive that form an equivalence class of size
two.
In both cases, at least one of them is white and at least one is black. We can
proceed similarly to find infinitely many points of both colors.
\end{proof}

\begin{defi}
A boundary point $p$ is called {em rich} if there is a translate of an
$S$-wedge $E_i$, such that $p$ is the only boundary point in it, but it
contains at least one interior point, see in Figure \ref{rich}.
\end{defi}

\begin{figure}[htpb]
\begin{center}
\includegraphics[height=30mm]{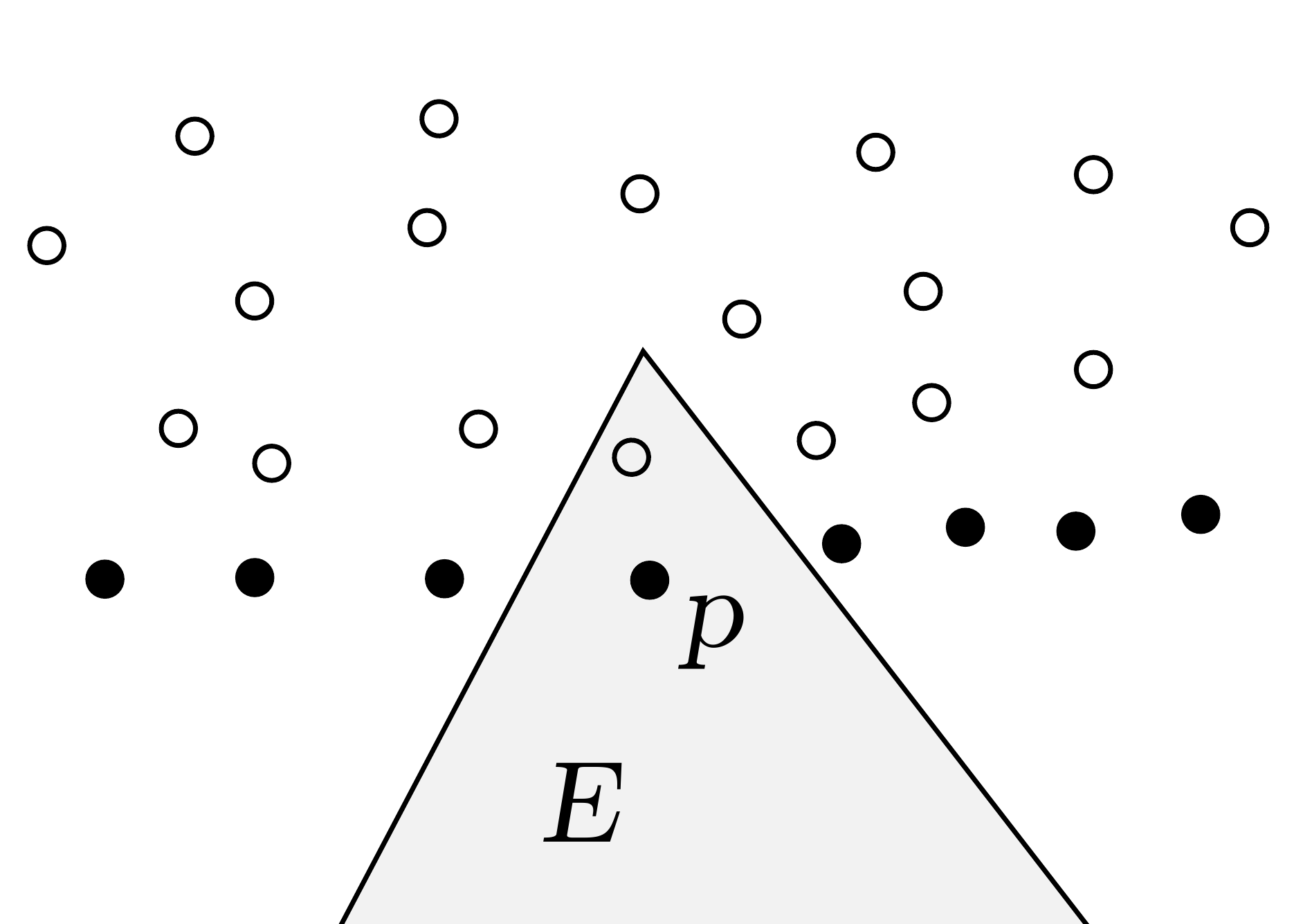}
\caption{The point $p$ is rich boundary point}
\label{rich}
\end{center}
\end{figure}
\subsection{Proof of Theorem \ref{dualfotetel}}

Now we are ready to prove Theorem \ref{dualfotetel}.
Suppose that $S$ is a closed, centrally symmetric convex polygon,
its vertices are $v_1, v_2, \ldots , v_{2n}$, ordered clockwise, the
$S$-wedges are $E_1$, $E_2$, $\ldots $, $E_{2n}$.
Let $S'$ be $S$ minus its boundary. 
Let ${\cal H}$ be a bounded set of points.

First we color the boundary, $\mathbb{B}$,
of ${\cal H}$, then we color the boundary 
 $\mathbb{B}'$ of the interior points, and finally we color the remaining points.
Very roughly speaking, the first level will be ``responsible'' for color blue
in wedges which contain many, but finitely many points, the next level is
responsible for color red, and coloring of the remaining points settles the
wedges with infinitely many points.

\medskip

{\sc Red-Blue-Coloring$(S, {\cal H})$}

\smallskip

\begin{itemize}

{\sl 

\item[]1. Let  $\mathbb{B}$ be the boundary of ${\cal H}$ with respect to
$S$. Color  $\mathbb{B}$, the {\em first level}, with 
procedure {\sc Black-White-Boundary-Coloring($S, {\cal H}$)}.
Then, let $p\in\mathbb{B}$ be 
\begin{itemize}
	\item blue, if rich or white,
	\item red otherwise.
\end{itemize}
Now let  ${\cal H}'={\cal H}\setminus \mathbb{B}$, the set of interior points.

\smallskip

2.  Let  $\mathbb{B}'$ be the boundary of ${\cal H}'$ with respect to
$S$. Color  all points of $\mathbb{B}'$, the {\em second level}, red.
%
Let  ${\cal H}''={\cal H}'\setminus \mathbb{B}'$, the set of interior
points of  ${\cal H}'$.

\smallskip

3.  Let  $\mathbb{B}''$ be the third level, boundary of ${\cal H}''$ with respect to
$S'$. (Watch out, $S'$ and not $S$!)
Color $\mathbb{B}''$  with procedure  {\sc Black-White-Boundary-Coloring($S',
  {\cal H}''$)}.
Then, let $p\in\mathbb{B}''$ be 
\begin{itemize}
	\item blue, if white,
	\item red if black.
\end{itemize}
Finally, let ${\cal H}'''={\cal H}''\setminus \mathbb{B}''$, the fourth level,
set of interior
points of  ${\cal H}''$, the set of still uncolored points.

\smallskip

4. Take a square that contain 
 ${\cal H}''$.
If it contains finitely many points  of ${\cal H}''$ (that is, ${\cal H}''$
has finitely many points),
color
them red and stop.
If it contains infinitely many points of ${\cal H}''$, 
then color one red and one blue. 
Divide the square into four smaller squares. In each of them, if there are
finitely many points of ${\cal H}''$, then color all uncolored points red, and
do not consider this square anymore.
If there are infinitely many  points in it, then color an uncolored point red
and another one blue. Then divide it into four little squares.
Repeat this infinitely many times. 
Finally color all points, which are still uncolored, red.
}
\end{itemize}

Now we prove that this coloring satisfies the conditions.
Suppose that $E_i(a)$ contains {\em finitely many} points of
${\cal H}$, but at least $9$. Then $E_i(a)$ contains at least one boundary
point of ${\cal H}$.

\smallskip

Case 1. $E_i(a)$ contains one point from the first level, that is,
$|E_i(a)\cap\mathbb{B}|=1$.
Then this point is rich, so it is blue, and 
 $E_i(a)$ contains at least 8 interior points.

\smallskip

Case 2. $|E_i(a)\cap\mathbb{B}|=2$. By Claim \ref{2interval}, $E_i(a)$
intersects  $\mathbb{B}$ in at most two intervals. If both contain one point,
then at least one of them is rich, so it is blue, and $E_i(a)$ contains at least 7 interior points.

\smallskip

Case 3. $3\le |E_i(a)\cap\mathbb{B}|\le 8$. Since $E_i(a)$
intersects  $\mathbb{B}$ in at most two intervals, it contains two consecutive 
boundary points, so one of them is blue, and $E_i(a)$ contains at least one interior point.

\smallskip

Case 4.  $|E_i(a)\cap\mathbb{B}|\le 9$. Then  $E_i(a)$ contains at
least 5 consecutive boundary points, say, $p_1$, $p_2$, $\ldots $, $p_{5}$.
At least three of them are blue. Suppose that all of them are blue. Procedure 
 {\sc Black-White-Boundary-Coloring($S, {\cal H}$)}
did not color three consecutive points white, therefore, at least one of
$p_2$, $p_3$ and $p_4$ got color blue, because it is rich. 
It is not hard to see that $E_i(a)$ contains
the
interior points corresponding to this rich boundary point. 

\smallskip

Summarizing, if  $E_i(a)$ contains at least $9$ but finitely many points,
then either it contains a point of both colors, or 
it contains a blue point on the boundary, and at least one interior point.
But in this case it contains a point of $\mathbb{B}'$, the  boundary of the
interior points, 
which is red, so we are done in the case when 
$E_i(a)$ contains {\em finitely many} but at least 9 points of
${\cal H}$.


\medskip

Now suppose that $E_i(a)$ contains {\em infinitely many} points of ${\cal H}$,
and suppose for contradiction that it does not contain a point of both colors.

\smallskip

Case 1. $E_i(a)$ contains infinitely many points from the boundary 
of ${\cal H}$. By Claim \ref{2interval}, $E_i(a)\cap\mathbb{B}$ consists of 
of at most two intervals, one of the intervals, say $I$, is infinite.
Procedure 
{\sc Black-White-Boundary-Coloring($S, {\cal H}$)} 
colors infinitely many points of $I$ to both colors.
It follows immediately, that there are infinitely many blue points in 
 $E_i(a)$.
Therefore, by our assumption, all points of $I$ got color blue. Then infinitely many of them
are rich. But then the infinitely many interior points that correspond to
these rich points, are also in 
 $E_i(a)$.

\smallskip

Case 2. $E_i(a)$ contains finitely many points from the boundary 
of ${\cal H}$, but at least one.
Then, just like in the finite case, it is not hard to see that
 $E_i(a)$ contains at least one blue point from the boundary, and infinitely
 many 
interior points.

\smallskip

Case 3.  $E_i(a)$ does not contain boundary points. 
Obviously, it contains infinitely many interior points, and by the definition it doesn't contain boundary points of  ${\cal H}'$.

\smallskip

So we can conclude that $E_i(a)$ contains infinitely many interior
points, and either it contains a blue boundary point, or no boundary points at
all.
Since 
we colored the boundary of the interior points, $\mathbb{B}'$ red,
we obtain that $E_i(a)$ contains a red point from its boundary, or no boundary
points at all, and infinitely many interior points of  ${\cal H}'$.

We assumed that  $E_i(a)$  does not contain a point of both colors,
therefore, either $E_i(a)\cap \mathbb{B}=\emptyset$ (and therefore  $E_i(a)\cap \mathbb{B}'=\emptyset$),
or  $E_i(a)\cap \mathbb{B}'=\emptyset$.
Assume the first, the argument in the second case is the same.

We know that $E_i(a)$ contains infinitely many points from
${\cal H}''$, the set of interior points of ${\cal H}'$.
We distinguish two cases.

\smallskip

Case 1. $E_i(a)$ contains infinitely many points from $\mathbb{B}''$, the
boundary of ${\cal H}''$ with respect to $S'$. The set 
 $E_i(a)\cap \mathbb{B}''$ is again the union of at most two intervals,
 therefore, one of the intervals contain infinitely many points, so by Claim
 \ref{mindketszin} it contains infinitely many points of both colors.

\smallskip

Case 2. $E_i(a)$ contains finitely many points from $\mathbb{B}''$.
Then it contains  infinitely many points from  the set ${\cal H}'''$, the
interior points of  ${\cal H}''$, with respect to $S'$.
We claim that in this case 
 $E_i(a)$ contains a point in its {\em interior}.
Suppose not. Then all points in  $E_i(a)\cap  {\cal H}''$ are on the boundary
of $E_i(a)$, so they all belong to $\mathbb{B}''$, a contradiction.
Therefore, there is a point $a_0\in{\cal H}$ in the interior of 
 $E_i(a)$.
Clearly, $E_i(a_0)$ is also in the interior of  $E_i(a)$. $E_i(a)\cap
\mathbb{B}=\emptyset$, hence 
$a_0$ is not a boundary point of ${\cal H}$, so 
there is a point $a_1$ in  $E_i(a_0)$. Since $a_1$ is not a boundary point
either, 
there is an $a_2$ in  $E_i(a_1)$. This way we get an infinite sequence
$a_0, a_1, \ldots $ of points in $E_i(a_0)$. With the exception of finitely
many, they belong to ${\cal H}'''$. They have an accumulation point 
$x$. The point 
$x\in E_i(a_0)$ since  $E_i(a_0)$ is closed, so $x$ is in the interior of 
$E_i(a)$.
Therefore, when we colored  ${\cal H}'''$, in step 4 of procedure 
{\sc Red-Blue-Coloring$(S, {\cal H})$}, once we arrived to a little square
which is in $E_i(a)$, 
contains $x$, 
and contains infinitely many points. So we colored one of the blue and one of
them red.
This concludes the proof of Theorem \ref{dualfotetel}.

\section{Infinite-fold coverings; Proof of Theorem 2.}


Just like in the proof of Theorem \ref{fotetel},
we can take the dual of the problem, and divide the plane into small 
squares. Therefore, it is enough to prove the following 
result.

\begin{thm}\label{vegtelen-dual}
Let $S$ be a closed, convex, centrally symmetric polygon, its vertices are
$v_1$, $v_2$, $\ldots ,$ $v_{2n}$, oriented clockwise. Then 
any bounded point set $\cal H$ can be colored with red and blue such that for
any translate of an $S$-wedge $E_i(p)$, if 
 $|E_i(p)\cap {\cal H}|=\infty $, then $E_i(p)\cap {\cal H}$
contains infinitely many red and infinitely many blue points. 
\end{thm}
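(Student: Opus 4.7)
Plan: We follow the proof structure of Theorem~\ref{dualfotetel}. The dualization-and-grid reduction that derives Theorem~\ref{fotetel} from Theorem~\ref{dualfotetel} transfers verbatim, so it suffices to establish the stated bounded case. The key observation is that {\sc Black-White-Boundary-Coloring} already has the strong property we want: Claim~\ref{mindketszin} asserts that any interval containing infinitely many boundary points receives infinitely many points of each color. Consequently the portions of {\sc Red-Blue-Coloring} that invoke this subroutine (Step~1 on $\mathbb{B}$ and Step~3 on $\mathbb{B}''$) already meet the strengthened requirement whenever the relevant boundary piece inside $E_i(a)$ is infinite.

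Two steps remain to be strengthened. Step~4, the square-subdivision step for the residual set, is easy to modify: in each terminal square that contains infinitely many still-uncolored points, I would enumerate them and alternate colors, so that infinitely many are red and infinitely many are blue. The accumulation argument at the end of the proof of Theorem~\ref{dualfotetel} then delivers infinitely many points of each color in any $E_i(a)$ that meets ${\cal H}'''$ in an infinite set. Step~2, which currently colors $\mathbb{B}'$ uniformly red, is the delicate one: if $E_i(a)\cap \mathbb{B}'$ is infinite while $E_i(a)\cap \mathbb{B}$ is finite, the original procedure produces only finitely many blue points in $E_i(a)$. I would replace Step~2 by a recursive application of the modified {\sc Red-Blue-Coloring} to ${\cal H}'={\cal H}\setminus \mathbb{B}$, and unroll the recursion through the successive boundary-peeling layers. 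For any infinite wedge $E_i(a)$ some layer of this recursion --- either an iterated boundary at some depth, colored by {\sc Black-White-Boundary-Coloring}, or the residual set handled by the strengthened Step~4 --- contains infinitely many points of $E_i(a)\cap {\cal H}$, and by construction that layer contributes infinitely many points of both colors.

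The main obstacle is to verify that after this recursive re-definition of Step~2, the \emph{finite} case of Theorem~\ref{dualfotetel} still goes through. The original analysis used that $\mathbb{B}'$ is uniformly red to supply a red point whenever $\mathbb{B}$ contributed only blue. In the modified algorithm the recursive call is itself an instance of Theorem~\ref{dualfotetel}, so $\mathbb{B}'$ still contains a red point in every sufficiently large wedge --- this time via the next layer's own rich-boundary mechanism --- and the finite-case argument carries through at the cost of enlarging the threshold $m$. The same checking has to be repeated at each layer of the recursion, and one has to make sure the cumulative threshold stays bounded uniformly; this bookkeeping, rather than any new geometric idea, is where I expect the real work to lie.
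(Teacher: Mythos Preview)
Your proposal has a genuine gap. The central claim --- that ``for any infinite wedge $E_i(a)$ some layer of this recursion \ldots\ contains infinitely many points of $E_i(a)\cap{\cal H}$'' --- is false. Take ${\cal H}=\{p_0,p_1,p_2,\ldots\}$ lying on the bisector of $E_i$ with $p_{k+1}\in E_i(p_k)$ for every $k$, accumulating at a point $a\notin{\cal H}$. Then $\mathbb{B}^{(k)}=\{p_k\}$ for every $k$, the residual set ${\cal H}^*$ is empty, and $E_i(a)$ contains the whole of ${\cal H}$ while meeting each layer in a single point. In your recursive scheme every layer is colored by the same rule (blue if rich or white), and each $p_k$ is rich at its own level because $E_i(p_k)$ contains $p_{k+1}\in{\cal H}^{(k+1)}$; hence every $p_k$ is blue and $E_i(a)$ receives no red point at all. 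The strengthened Step~4 never activates since ${\cal H}^*=\emptyset$.

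The paper's remedy is precisely to break this monochromatic cascade by \emph{alternating} the roles of the two colors between consecutive boundary layers: at even depths rich-or-white becomes blue, at odd depths rich-or-white becomes red. Under that alternation the example above produces $p_0$ blue, $p_1$ red, $p_2$ blue, \ldots, and more generally Case~I(a) of the paper's analysis goes through. The paper also adds a separate sparse square-subdivision pass over $\bigcup_n\mathbb{B}^{(n)}$ to handle accumulation points fed by infinitely many layers, and it switches to \emph{open} $S'$-wedges when peeling boundaries. None of these ingredients appears in your outline. Finally, the issue you flag as ``the main obstacle'' --- keeping the finite-case threshold $m$ bounded through the recursion --- is a red herring here: Theorem~\ref{vegtelen-dual} concerns only wedges with infinitely many points, and the paper in fact uses a different coloring for it than for Theorem~\ref{dualfotetel}, deferring a unified algorithm to a remark.
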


Let $S'$ be $S$ minus its boundary and for $1\le i\le 2n$, let 
$E_i'$ be $E_i$ minus its boundary. That is, $E'_1$, $E'_2$, $\ldots$,
$E'_{2n}$
are the $S'$-wedges.

From now on, {\em boundary} of a point set is understood according to
$S'$, and not $S$. 

\begin{itemize}
\item Let $\mathbb{B} = \mathbb{B}^{(0)}$ the set of boundary points
          of $\cal H$. Its interior points 
${\cal H}^{(1)} = {\cal H} \setminus \mathbb{B}^{(0)}$.

\item Let $\mathbb{B}' = \mathbb{B}^{(1)}$ the set of boundary points
          of ${\cal H}^{(1)}$. Its interior points 
${\cal H}^{(2)} = {\cal H}^{(1)} \setminus \mathbb{B}^{(1)}$.
  
\item $\ldots $
  
\item Let $\mathbb{B}^{(n)}$ be the set of boundary points
          of ${\cal H}^{(n)}$. Its interior points  
${\cal H}^{(n+1)} = {\cal H}^{(n)} \setminus \mathbb{B}^{(n)}$.

\item $\ldots $

\end{itemize}

Moreover, let 
$\mathbb{B}^* = \bigcup_{n \in \mathbb{N}} \mathbb{B}^{(n)}$, 
and  ${\cal H}^* = {\cal H} \setminus \mathbb{B}^*$.

We call  $\mathbb{B}^{(n)}$ the $n$-th boundary level of ${\cal H}$.
Let $\mathbb{B}_i^{(n)}$ be the set of $E_i'$-boundary points 
of $\mathbb{B}^{(n)}$, and let 
$\mathbb{B}_i^*=\bigcup_{n \in \mathbb{N}} \mathbb{B}_i^{(n)}$.


Now we are ready to give the coloring algorithm.

\bigskip

{\sc Multiple-Red-Blue-Coloring($S, {\cal H}$)}


\begin{itemize}

\medskip


{\sl 


\item[] Step 1. We color a subset of  $\mathbb{B}^*$ so that we color at most one point
from each four consecutive levels.
For each $p \in \mathbb{B}^*$ let $h(p) = n$ if and only if $p \in
\mathbb{B}^{(n)}$. That is, each $p$ is on the $h(p)$-th level.
Take a square $Q_1$ which contains 
$\mathbb{B}^*$. Divide it into four little squares,
these are $Q_2$, $Q_3$, $Q_4$ and $Q_5$. Then divide $Q_2$ into four little
squares, these are $Q_6$, $Q_7$, $Q_8$, $Q_9$. 
Similarly, divide $Q_3$ to get  $Q_{10}, \ldots , Q_{13}$, and continue
similarly.
Eventually we divide each square in the list into four little squares,
and put them in the list.
This way we obtain an infinite list $Q_1$, $Q_2$, $\ldots $
of squares.
%
%

In Step 1.1, if $Q_1$ contains infinitely many points of $\mathbb{B}^*$,
then color one of them, $p_1$, {\bf red}. Otherwise, we stop.
In Step 1.2, if $Q_1$  contains infinitely many points of
 $\mathbb{B}^* \setminus \bigcup_{l< h(p_1) + 3 } \mathbb{B}^{(l)}$,
then color one of them, $p_2$, {\bf blue}. Otherwise, we stop.

In general, in Step $1.(2k-1)$, if  $Q_k$  contains infinitely many points of
the set 
$\mathbb{B}^* \setminus
\bigcup_{l<h(p_{2k-1}) +3} \mathbb{B}^{(l)}$, 
then color one of them, $p_{2k-1}$, {\bf red}. Otherwise, we stop.
Then, in Step $1.2k$, if  $Q_k$  contains infinitely many points of
the set 
$\mathbb{B}^* \setminus
\bigcup_{l<h(p_{2k-2}) +3} \mathbb{B}^{(l)}$,
then color one of them, $p_{2k}$, {\bf blue}. Otherwise, we stop.

After countably many steps, we are done with Step 1.

\medskip

In the following steps we color the uncolored points. 

\medskip

\item[] Step 2.  For each even $n$, color 
$\mathbb{B}^{(n)}$ with procedure 
{\sc Black-White-Boundary-Coloring($S', {\cal H}^{(n)}$)}.
Now a boundary point $p\in\mathbb{B}^{(n)}$ will be 
\begin{itemize}
	\item {\bf blue}, if it is rich of white,
	\item {\bf red} otherwise.
\end{itemize}

\medskip

\item[] Step 3.  For each odd $n$, color 
$\mathbb{B}^{(n)}$ with procedure 
{\sc Black-White-Boundary-Coloring($S', {\cal H}^{(n)}$)}.
Now a boundary point $p\in\mathbb{B}^{(n)}$ will be 
\begin{itemize}
	\item {\bf red}, if it is rich of white,
	\item {\bf blue} otherwise.
\end{itemize}

That is, we change the roles of the colors.

\medskip

\item[] Step 4. Take a square which contains 
${\cal H}^*$. If
it contains infinitely many points from 
${\cal H}^*$, (that is, ${\cal H}^*$ has infinitely many points)
then color one of them blue and one of them red. 
Divide the square into four little squares.
In each of them, which contains infinitely many points from 
${\cal H}^*$, color one of the uncolored points blue and one of them red, and
divide it into four smaller squares.
Continue recursively. 
Once we obtain a square which contains only finitely many 
 points from 
${\cal H}^*$, color all uncolored points red, and
do not divide it into smaller squares.

\medskip

} 
\end{itemize}

Suppose that ${\cal H}$ is colored by procedure 
{\sc Multiple-Red-Blue-Coloring($S, {\cal H}$)}.
We show that if a translate of an $S$-wedge contains infinitely many 
points of ${\cal H}$, then it contains infinitely many 
points of both colors.
First we show that a wedge contains an accumulation point in its interior,
then it contains infinitely many 
points of both colors.

\begin{lem}
Suppose that 
$E_i(a) \cap {\cal H}$ is infinite and this set has 
an accumulation point in the interior of $E_i(a)$.
Then $E_i(a)$  contains  infinitely many 
points of both colors.
\end{lem}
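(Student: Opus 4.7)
Since $x$ is an accumulation point of $E_i(a)\cap{\cal H}$ in the interior of $E_i(a)$, I fix an open disk $B$ around $x$ with $B\subset E_i'(a)$ (the interior of $E_i(a)$), so that $B\cap{\cal H}$ is still infinite. The plan is to isolate which layer of the hierarchy ${\cal H}^*,\mathbb{B}^{(0)},\mathbb{B}^{(1)},\ldots$ absorbs the accumulation near $x$, and then to invoke whichever step of {\sc Multiple-Red-Blue-Coloring} is responsible for that layer. By pigeonhole, one of the following holds: (A) $B\cap{\cal H}^*$ is infinite; (B1) $B$ meets infinitely many distinct levels $\mathbb{B}^{(n)}$; (B2) a single level $\mathbb{B}^{(n_0)}$ has $|B\cap\mathbb{B}^{(n_0)}|=\infty$.

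Cases (A) and (B1) are the easy cases and reduce directly to the built-in dyadic subdivisions. In (A), the nested dyadic cells around $x$ coming from the recursive quartering in Step~4 eventually lie inside $B$, and each such cell still contains infinitely many points of ${\cal H}^*$ because $x$ is an accumulation point; Step~4 therefore colours one point of every such cell blue and one red, giving infinitely many of each colour in $B\subset E_i(a)$. Case (B1) is treated the same way via Step~1: the small cells $Q_k\subset B$ around $x$ contain points at arbitrarily high levels, so the level-threshold tests in Step~$1.(2k-1)$ and Step~$1.2k$ are always satisfied, and each such $Q_k$ contributes one red and one blue point.

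Case (B2) is the delicate one. Since $B\subset E_i'(a)$, the set $E_i'(a)\cap\mathbb{B}^{(n_0)}$ is infinite; the open-wedge analog of Claim~\ref{2interval} (the angular argument used there needs only convexity of $S$ and so carries over verbatim to $S'$-wedges and to the boundary of ${\cal H}^{(n_0)}$) decomposes this set into at most two $\prec$-intervals of $\mathbb{B}^{(n_0)}$, at least one of which, $I$, is infinite. Applying Claim~\ref{mindketszin} to the call {\sc Black-White-Boundary-Coloring}$(S',{\cal H}^{(n_0)})$ performed in Step~2 (if $n_0$ is even) or Step~3 (if $n_0$ is odd) then yields infinitely many black and infinitely many white points of $I$. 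Whichever step is active, the white points of $I$ all receive the same target colour (blue for even $n_0$, red for odd), producing infinitely many of that colour inside $E_i(a)$. If in addition infinitely many black points of $I$ are non-rich, the opposite colour also appears infinitely often and we are done.

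The main obstacle is the residual sub-case of (B2) in which all but finitely many black points of $I$ are rich. Here I would reuse the rich-witness idea from the last paragraph of the proof of Theorem~\ref{dualfotetel}: each such rich point pulls into $E_i(a)$ a witness interior point of ${\cal H}^{(n_0+1)}$ by the same convexity argument, producing infinitely many points of $E_i(a)\cap {\cal H}^{(n_0+1)}$ that still accumulate near $x$ in the interior of $E_i(a)$. Running the three-case split again on this strictly higher level either lands immediately in (A) or (B1) at that level, in which case the missing colour appears, or falls into a fresh instance of (B2) at a strictly larger $n_0$. Since each descent strictly increases the level, the iteration either terminates in (A) or (B1) at some finite stage or produces points at arbitrarily high levels accumulating at $x$, which is itself case (B1); in every outcome infinitely many points of the opposite colour appear in $E_i(a)$, finishing the proof.
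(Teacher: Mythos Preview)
Your three-case split (A)/(B1)/(B2) is precisely the paper's decomposition, and your treatment of (A) and (B1) via the dyadic subdivisions of Steps~4 and~1 matches the paper's Cases~1 and~2.

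The gap is in (B2). Your iteration rests on the assertion that the witness points $p'_j\in{\cal H}^{(n_0+1)}$ ``still accumulate near $x$ in the interior of $E_i(a)$'', but the convexity argument you cite from the proof of Theorem~\ref{dualfotetel} only yields $p'_j\in E_i(a)$; nothing prevents the $p'_j$ from drifting to the boundary of $E_i(a)$, and then you cannot rerun the case split with the same ball $B$. A step earlier there is a related issue: you apply Claim~\ref{mindketszin} only to the full interval $I\subset E_i'(a)\cap\mathbb{B}^{(n_0)}$, so the rich black points you extract need not lie anywhere near $x$. The paper supplies exactly this missing ingredient. It first arranges that the rich points $p_j$ themselves converge to $t=x$ (one obtains black points in every short homogeneous subinterval of $\mathbb{B}^{(n_0)}$ around $x$ by Claim~\ref{mindketszin}), and then argues that because the witness wedge for $p_j$ contains $p_j$ and \emph{no other} level-$n_0$ boundary point, the fact that $|p_j-p_{j+1}|\to 0$ forces $|p_j-p'_j|\to 0$, hence $p'_j\to x$.

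Once that geometric fact is available, the paper's organisation is also simpler than yours: instead of iterating, it takes $n_0$ to be the \emph{largest} level with $x$ an accumulation point of $\mathbb{B}^{(n_0)}$ (and assumes we are not already in case (A)); then $p'_j\to x$ with $p'_j\in{\cal H}^{(n_0+1)}$ is an immediate contradiction. Your descent is this maximality argument unrolled, so after patching the gap you may as well collapse it to one step.
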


\begin{proof}
We have several cases according to the types of points the converge to $t$.

{\bf 1.} Point $t$ is the accumulation point of the interior points
(${\cal H}^*$).
In this case, in Step 4, we found infinitely many little squares that contain
infinitely many points of ${\cal H}^*$ but contained in  $E_i(a)$. Therefore,
$E_i(a)$ contains 
infinitely many 
points of both colors.

{\bf 2.} There are infinitely many boundary levels whose points converge to
$t$. In this case we can argue similarly as in the previous case.
In Step 1 of the procedure we produce a red and a blue sequence of points 
that converge to $t$. $E_i(a)$ contains infinitely many of both sequences.

{\bf 3.} Suppose now that there are only finitely many  boundary levels whose points converge to
$t$, and $t$ is not the  accumulation point of the interior points
(${\cal H}^*$).
Let $n$ be the largest number with the property that $t$ is an 
accumulation point of $\mathbb{B}^{(n)}$. 

Then it follows from Claim \ref{mindketszin} that 
$E_i(a)$ contains infinitely many
black and white points. If $E_i(a)$ does not contain infinitely many 
red and blue points, then there is  a sequence $p_1, p_2, \ldots$ 
of rich boundary points that
converge to $t$.
For each rich boundary point $p_j\in \mathbb{B}^{(n)}$, there is a point $p'_j\in 
{\cal H}^{(n+1)}$ which ``proves its richness'', that is, there is a translate
$E^{(j)}_i$ of $E_i$ which contains $p_j$ and no other boundary point of 
${\cal H}^{(n)}$, and also contains 
$p'_j$ of 
${\cal H}^{(n+1)}$. Since the sequence $p_1, p_2, \ldots$ converges to $t$,
the distance between $p_j$ and $p_{j+1}$ also goes to $0$ as $j$ goes to
infinity.
Therefore, the distance between $p_j$ and $p'_{j}$ also goes to $0$, so 
the sequence $p'_1, p'_2, \ldots$ converges to $t$ as well.
But this contradicts our assumptions. 
\end{proof}

So, we are done if the points in $E_i(a)$ have an accumulation point in the
interior of it. Suppose now that there is no such accumulation point.

I. Assume that 
$E_i(a) \cap \mathbb{B}_i^*$ is infinite.
Observe that if 
$E_i(a)\cap \mathbb{B}_i^{(n)} \neq \emptyset$, then, by the definition of the
boundary levels, for every  $k<n$,
$E_i(a)\cap \mathbb{B}_i^{(k)} \neq \emptyset$.
%

We distinguish two subcases.

{\bf (a)} Suppose that for every $n$, $E_i(a)\cap \mathbb{B}_i^{(n)}\neq
\emptyset$. Then, since we changed the roles of the colors for the even and
odd numbered levels, 
for $n$ even, the sets $E_i(a)\cap \mathbb{B}_i^{(n)}$
contain infinitely many blue points, for $n$ odd, they contain infinitely many
red points. 

{\bf (b)} Suppose now, that $E_i(a)\cap
\mathbb{B}_i^{(n)} \neq \emptyset$ holds only for finitely many levels.
Let $n$ be the largest number such that 
$E_i(a)\cap \mathbb{B}^{(n)}$ is infinite.
By Claim \ref{mindketszin},
procedure 
{\sc Black-White-Boundary-Coloring($S, {\cal H}^{(n)}$)} colors 
infinitely many points of $E_i(a)\cap \mathbb{B}^{(n)}$ black and white.
So, the only problem could be, that infinitely many black point of them are
rich. Now we can argue similarly as in part 3. Let 
$p_1, p_2, \ldots$ be a sequence of 
of rich boundary points in $\mathbb{B}^{(n)}$. 
For each $p_j\in \mathbb{B}^{(n)}$, there is a point $p'_j\in 
{\cal H}^{(n+1)}$, and a translate of $E^{(j)}_i$ of $E_i$  which ``prove its richness''.
But then $E_i(a)$ also contain the sequence
$p'_1, p'_2, \ldots$. Since $n$ is the largest number such that 
$E_i(a)\cap \mathbb{B}^{(n)}$ is infinite, only finitely many of 
$p'_1, p'_2, \ldots$ could belong to  $\mathbb{B}_i^*$.
On the other hand, if any $E^{(j)}_i$ contains infinitely many 
points of ${\cal H}^{(n+1)}$, then they have an accumulation point which is in
the interior of  $E_i(a)$, contradicting our assumption. 
Therefore, each $E^{(j)}_i$ contains only finitely many 
points of ${\cal H}^{(n+1)}$. But then they all belong to 
some boundary level, which is a contradiction again.
%

\medskip

II. Finally, suppose that $E_i(a) \cap \mathbb{B}_i^*$ is finite. We can
assume without loss of generality that it is empty.
We assumed that there is no accumulation point in the interior of 
 $E_i(a)$.
If  $E_i(a)$ does not contain {\em any} point in its interior, we have a
contradiction, since in this case all points in  $E_i(a)$ are $E_i$-boundary points.
If it contains a point $p_0$ in its interior, then, since it is not an
$E_i$-boundary point, $E_i(p_0)$ contains a point $p_1$. Since it is not an
$E_i$-boundary point either, $E_i(p_1)$ also contains a point $p_2$. 
We get an infinite sequence $p_1, p_2, \ldots$ in  $E_i(p_0)$, so they have an
accumulation point $t$ in the interior of $E_i(a)$. It is again a
contradiction. 

This concludes the proof of Theorem \ref{vegtelen-dual}, and therefore we also
proved Theorem \ref{vegtelen}.


\section{Remarks; Other versions of cover-decomposability}

1. The concept of cover-decomposability has many other versions, instead of the
plane, we can consider multiple coverings of an arbitrary set, 
we can assume that we have finitely many, countably many, or arbitrarily many 
translates in the covering. These versions are sometimes confused in the
literature, moreover, there are some incorrect statements because not the
correct version of  cover-decomposability is used.
See \cite{P10} and \cite{PPT14} for an overview.

Every covering in the sequel is a family of
translates of a planar set $S$.

\begin{defi}

{\rm (a)} A covering is {\em finite}, if it contains finitely many translates.

{\rm (b)} A covering is  {\em locally finite}, if any compact set intersects
only finitely many translates.

{\rm (c)} A covering is {\em countable}, if it contains countably many translates.
\end{defi}

Now we define eight different versions of cover-decomposability.

\begin{defi}
A planar set $S$ is

\noindent $\{$finite, locally finite, countable, or arbitrary$\}$ 

\noindent $\{$plane- or total-$\}$
 
\noindent cover-decomposable, if there is a constant $k$ such that any

\noindent $\{$finite, locally finite, countable, or arbitrary$\}$ 

\noindent $k$-fold covering of the 
 
\noindent $\{$the plane, or any planar set$\}$

\noindent can be decomposed into two coverings.

\end{defi}

Our Theorem \ref{fotetel} states that every 
centrally symmetric
closed convex polygon is plane-arbitrary-cover-decomposable.
It is not hard to see, that our proof works also for the other versions of
cover-decomposability.
It was known only for those versions which could be reduced to a finite
problem.
The next table summarizes the references for all positive results for 
centrally symmetric
closed convex polygons. [KT] refers to the present note.

\bigskip
{\footnotesize
\begin{tabular}{c|c|c|c|c|}
 & finite
 & locally finite
 & countable
 & arbitrarily many \\
\hline
 a plane
 & $-$ & \cite{P86} & [KT] & [KT] \\
\hline
 any planar set
 & \cite{P86} & \cite{P86} & [KT] & [KT] 
\end{tabular}}

\bigskip

Our proof, with hardly any modification, implies the same results for {\em open}
centrally symmetric
convex polygons. 
In this case it is easier to reduce the problem to the finite case, therefore, 
cover-decomposability was proved for more versions.
The next table summarizes the situation for 
centrally symmetric
open convex polygons. 

\bigskip
{\footnotesize
\begin{tabular}{c|c|c|c|c|}
 & finite
 & locally finite
 & countable
 & arbitrarily many \\
\hline
 a plane
 & $-$ & \cite{P86} & \cite{P86} & \cite{P86} \\
\hline
 any planar set
 & \cite{P86} & \cite{P86} & [KT] & [KT] 
\end{tabular}}

\bigskip

2. In the proof of Theorems \ref{fotetel} and \ref{vegtelen} 
we used different colorings. In fact, there is a single 
coloring algorithm which
can be used in both proofs, but we found it too technical to present it.

\end{document}